\newlength\yStones
\newlength\xStones
\newlength\xxStones
\def\Stones{\pst@object{Stones}}
\def\Stones@i#1{%
  \pst@killglue%
  \begingroup%
  \use@par%
  \setlength\xxStones{\xStones}%
  \expandafter\Stones@ii#1,,\@nil
  \endgroup
  \global\addtolength\xStones{0.6cm}%
  \global\addtolength\yStones{-7.5mm}}%
\def\Stones@ii#1,#2,#3\@nil{%
  \rput(\xxStones,\yStones){%
    \psframebox[framesep=0]{%
      \parbox[c][6mm][c]{11mm}{\makebox[11mm]{$#1$}}}}%
  \addtolength\xxStones{1.2cm}%
  \ifx\relax#2\relax\else\Stones@ii#2,#3\@nil\fi}
\def\Stone#1{\fbox{\makebox[12mm]{\strut#1}}\kern2pt}
\newcommand{\Z}{\mathbb{Z}}
\newcommand{\A}{\mathbb{A}}
\newcommand{\gln}{\mathfrak{gl}_n}
\newcommand{\cV}{\mathcal{V}}
\newcommand{\gl}{\mathfrak{gl}}
\newcommand{\Hom}{\mathrm{Hom} \,}
\newcommand{\End}{\mathrm{End}}
\newcommand{\Der}{\mathrm{Der}}
\newcommand{\fxpartial}[2]{\frac{\partial #1}{\partial #2}}
\newcommand{\xpartial}[1]{\frac{\partial}{\partial #1}}
\newcommand{\cD}{\mathcal{D}}
\newcommand{\cL}{\mathcal{L}}
\newcommand{\fm}{\mathfrak{m}}
\newcommand{\bk}{\Bbbk}
\newcommand{\Diff}{\mathrm{Diff}}
\newcommand{\GKdim}{\mathrm{GKdim}}
\newtheorem{theorem}{Theorem}[section]
\newtheorem{lemma}[theorem]{Lemma}
\newtheorem{corollary}[theorem]{Corollary}
\newtheorem{proposition}[theorem]{Proposition}
\theoremstyle{definition}
\newtheorem{example}[theorem]{Example}
\newcommand{\details}[1]{{\color{blue}\noindent\textbf{Details:}{#1}}}
                  \newcommand{\details}[1]{}
\begin{document}
\begin{title}[Holonomic $A\cV$-modules for the affine space]{Holonomic $A\cV$-modules for the affine space}
\end{title}

\author[Y. Billig]{Yuly Billig}
\address{Carleton University \\ Ottawa \\ Canada}
\email{billig@math.carleton.ca}
\author[H. Rocha]{Henrique Rocha}
\email{henriquerocha@cunet.carleton.ca}


\begin{abstract}
We study the growth of representations of the Lie algebra of vector fields on the affine space that admit a compatible action of the polynomial algebra. We establish the Bernstein inequality for these representations, enabling us to focus on modules with minimal growth, known as holonomic modules. We show that simple holonomic modules are isomorphic to the tensor product of a holonomic module over the Weyl algebra and a finite-dimensional $\gl_n$-module. We also prove that holonomic modules have a finite length and that the representation map associated with a holonomic module is a differential operator. Finally, we present examples illustrating our results.
\end{abstract}


\subjclass[2020]{17B10, 17B66}


\maketitle


\tableofcontents    


\section*{Introduction}
In the realm of modern algebraic geometry and representation theory, the theory of $\cD$-modules has emerged as a powerful framework with far-reaching implications. In the algebraic context, a $\cD$-module is a module over the algebra of differential operators. It can be seen as a representation of the Lie algebra of vector fields together with an action of the algebra of functions. These two actions not only need to be compatible, satisfying the Leibniz rule, but the representation of the Lie algebra is required to be linear with respect to the action of the algebra of functions. 

Recently, a wider class of modules has been studied, the $A\cV$-modules. These can be seen as a generalization of $\cD$-modules since they have the same axioms except that the linearity condition of the representation of the Lie algebra of vector fields is not required. The development of a general theory of $AV$-modules on smooth algebraic varieties is presented in \cite{BF18,BFN19,BNZ21,BIN23,BR23, BI23, BB24}.

Holonomic modules form an especially important class of $\cD$-modules. For instance, one of their applications appeared in the Riemann-Hilbert correspondence, which provides a relation between certain types of holonomic modules and local systems \cite{Kas80, Kas84}. They were also vital in the proof of the Kazhdan–Lusztig conjecture \cite{BB81,BK81}. In this paper, we will show that the Gelfand-Kirillov dimension of $A\cV$-modules for the affine space $\A^n$ is greater or equal to $n$, a result analogous to the Bernstein inequality for $\cD$-modules. We define holonomic $A\cV$-modules as finitely generated modules that 
have Gelfand-Kirillov dimension $n$. 

In the case of the affine space $\A^n$, it was proved in \cite{XL23,BIN23} that $A\cV$-modules are modules over the tensor product $\cD \otimes U(\cL_+)$, where $\cD$ denotes the $n$-th Weyl algebra and $\cL_+$ denotes the Lie algebra of vector fields vanishing at the origin. In this paper, we will extensively use this isomorphism to study the holonomic $A\cV$-modules for the affine space.

The main result of this paper is Theorem \ref{theorem:simpleholonomicistensorproduct} that states that every simple holonomic $A\cV$-module is a tensor product of a simple holonomic $\cD$-module and a simple finite-dimensional $\gln$-module. We will also show in Theorem \ref{theorem:holonomicavmodhasfinitelength} that holonomic $A\cV$-modules have finite length. Finally, Theorem \ref{theorem:holonomicmodulesaredifferentiable} states that holonomic $A\cV$-modules are differentiable. This implies that holonomic modules can be associated with a bundle on the affine space that has a compatible action of the tangent bundle.

In Section \ref{section:preliminaries}, we introduce notations and define $A\cV$-modules. Preliminary results concerning the Gelfand-Kirillov dimension of $A\cV$-modules are presented in Section \ref{section:gkdimofavmod}. The main results of this paper are established in Section \ref{section:holonomicavmod}, where we focus on holonomic $A\cV$-modules. Following this, we discuss the notion of differentiable modules in Section \ref{section:differentiablemodules} and show that holonomic $A\cV$-modules are differentiable. Finally, in Section \ref{section:examplesofholonomicavmod}, we present examples of holonomic $A\cV$-modules.

\section*{Acknowledgments}	
The authors thank Kathlyn Dykes for numerous helpful discussions.
Y.B. gratefully acknowledges support with a Discovery grant from the Natural Sciences and Engineering Research Council of Canada.

\section{Preliminaries}
\label{section:preliminaries}

In this paper, $\bk$ is an algebraically closed field of characteristic $0$. Let $A = \bk[x_1,\dots,x_n]$ be the polynomial algebra in $n$ variables and let
\[
\cV = \Der(A) = \bigoplus_{i=1}^n \bk[x_1,\dots,x_n] \xpartial{x_i}
\]
be the Lie algebra of derivations of $A$. The associative algebra $U(\cV)$ is a Hopf algebra with coproduct $\Delta$. Furthermore, the commutative algebra $A$ is naturally a $U(\cV)$-module. The smash product $A \# U(\cV)$ is an associative algebra defined on the vector space $A \otimes U(\cV)$ with the product
\[
(f \otimes u)(g \otimes v)= \sum_{i} f u_i^{(1)}(g)  \otimes u_i^{(2)}v,
\]
where $\displaystyle \Delta(u) = \sum_{i} u_i^{(1)} \otimes u_i^{(2)}$. We will denote the element $f \otimes u \in A \# U(\cV)$ by $f \# u$ for $f \in A$ and $u \in U(\cV)$. With this notation,
\[
(f \# \eta)(g \# \mu) = f\eta(g) \# \mu + fg \# \eta \mu \ \ \ \text{for} \ f,g \in A, \ \eta,\mu \in \cV.
\]
For more details about Hopf algebras and smash products, we refer to \cite{DNR01}.

By \cite{XL23,BIN23}, the associative algebra $A \# U(\cV)$ is isomorphic to the tensor product $\cD \otimes U(\cL_+)$. The first algebra in this tensor product is the Weyl algebra $\cD$ of rank $n$. The algebra $A$ acts on itself by multiplication hence it can be seen as a subalgebra of $\End_{\bk}(A)$. Furthermore, $\cV$ is by definition a subspace of $\End_{\bk}(A)$. The algebra $\cD$ is the associative subalgebra of $\End_{\bk}(A)$ generated by the subspaces $A$ and $\cV$. The second tensor factor in the isomorphism is the universal enveloping algebra of the infinite-dimensional Lie algebra that we will construct as follows. Let $\cL$ be the Lie algebra of derivations of $\bk[X_1,\dots,X_n]$. This Lie algebra is isomorphic to $\cV$ but we would like to keep the distinction between $\cV$ and $\cL$. Consider the Lie subalgebra $\cL_+ =\fm \cL \subset \cL$, where $\fm$ is the maximal ideal in $\bk[X_1, \dots, X_n]$ generated by $X_1,\dots, X_n$. We have that $\cL_+$ is a graded Lie algebra $\displaystyle \cL_+ = \bigoplus_{d \geq 0} \cL_d$ and a basis of $\cL_d$ is given by 
\[
X^k \xpartial{X_i}, \ i \in \{1,\dots,n\}, k \in \Z_+, \ |k| = d +1.
\]

\begin{theorem}[{\cite{XL23,BIN23}}]\label{theorem:avisisotodtensorlplus}
The algebras $A \# U(\cV)$ and $\cD \otimes U(\cL_+)$ are isomorphic. The isomorphism maps $\varphi: A \# U(\cV) \rightarrow D \otimes U(\cL_+) $ and its inverse $\psi: D \otimes U(\cL_+) \rightarrow A \# U(\cV) $ defined by
\[
\varphi\left (x^k \xpartial{x_p} \right ) =  x^k\xpartial{x_p} \otimes 1 + \left (\sum_{0 < m \leq k } \binom{k}{m} x^{k-m} \otimes X^{m} \xpartial{X_p}\right ),
\]
$\varphi$ restricted to $A$ is the natural embedding into $\cD$, and 
\[
\psi|_{\cD} \left  (x^r \partial^s \right ) =  x^r \# \left (\xpartial{x_1} \right )^{s_1} \cdots \left (\xpartial{x_n} \right )^{s_n},
\]
\[
\psi|_{\cL_+} \left ( X^m \xpartial{X_p}\right ) = \sum_{0 \leq k \leq m} (-1)^{k} \binom{m}{k} x^{k} \# x^{m-k} \xpartial{x_p}.
\]
\end{theorem}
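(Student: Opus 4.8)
The plan is to verify directly that both $\varphi$ and $\psi$ are algebra homomorphisms and that they are mutually inverse, the latter being checked only on generators. Since $A \# U(\cV)$ is generated by $A$ and $\cV$ subject to the commutativity of $A$, the Lie relations of $\cV$, and the smash-product (Leibniz) relation $[\eta, f] = \eta(f)$ for $\eta \in \cV$, $f \in A$, it suffices to define $\varphi$ on these generators and check that the relations are preserved. The condition on $A$ is immediate because $\varphi|_A$ is the natural embedding. For the Leibniz relation I would write $\varphi(\eta) = a_\eta + b_\eta$, where $a_\eta = x^k \xpartial{x_p} \otimes 1$ lies in $\cD \otimes 1$ and $b_\eta = \sum_{0 < m \le k} \binom{k}{m} x^{k-m} \otimes X^m \xpartial{X_p}$ lies in $A \otimes \cL_+$; since the polynomial coefficients of $b_\eta$ commute with $f$ inside $\cD$ and $1$ commutes with $\cL_+$, one gets $[b_\eta, f \otimes 1] = 0$, so $[\varphi(\eta), f \otimes 1] = [x^k \xpartial{x_p}, f]_{\cD} \otimes 1 = \eta(f) \otimes 1 = \varphi(\eta(f))$. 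Thus the Leibniz check is painless, and the real content is the preservation of the Lie bracket on $\cV$.

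For the bracket, set $\mu = x^l \xpartial{x_q}$ and expand $[\varphi(\eta), \varphi(\mu)] = [a_\eta, a_\mu] + [a_\eta, b_\mu] + [b_\eta, a_\mu] + [b_\eta, b_\mu]$, using that $\cD \otimes 1$ and $1 \otimes U(\cL_+)$ commute and that polynomials commute in $\cD$. The term $[a_\eta, a_\mu]$ is the bracket computed inside the Lie subalgebra $\cV \subset \cD$, and it matches the $\cD \otimes 1$ component of $\varphi([\eta,\mu])$. The mixed terms $[a_\eta, b_\mu]$ and $[b_\eta, a_\mu]$ produce $A \otimes \cL_+$ contributions coming from the Weyl-algebra commutators $[x^k \xpartial{x_p}, x^{l-m'}] = (l-m')_p\, x^{k+l-m'-e_p}$, while $[b_\eta, b_\mu]$ produces the $\cL_+$-brackets $[X^m \xpartial{X_p}, X^{m'}\xpartial{X_q}]$ tensored with products of polynomials. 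The goal is to show that the sum of these three contributions equals the $A \otimes \cL_+$ component of $\varphi([\eta,\mu])$. This is the main obstacle: after collecting terms by the basis elements $X^{m''}\xpartial{X_q}$ and $X^{m''}\xpartial{X_p}$, matching coefficients reduces to Vandermonde-type binomial identities relating $\binom{k}{m}\binom{l}{m'}$ to $\binom{k+l-e_p}{m''}$. I expect the cleanest way to organize this is to fix the target monomial $X^{m''}$ and sum over the admissible pairs $(m,m')$, reindexing so that the $m=0$ and $m'=0$ boundary terms (which are carried by $a_\eta$ and $a_\mu$) are absorbed uniformly.

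The verification that $\psi$ is a homomorphism is analogous. That $\psi|_{\cD}$ respects the Weyl relations is immediate, since $\psi(x_i) = x_i \# 1$ and $\psi(\xpartial{x_i}) = 1 \# \xpartial{x_i}$ satisfy $[\xpartial{x_i}, x_j] = \delta_{ij}$ in the smash product; that $\psi|_{\cL_+}$ is a Lie homomorphism is the analogue of the bracket computation above and is the second substantial step. The commutativity of the two images, $[\psi(\cD), \psi(\cL_+)] = 0$, follows from the identity $\sum_{0 \le k \le m} (-1)^k \binom{m}{k} = 0$ for $|m| \ge 1$: for the $x_i$ generators it gives the cancellation directly, and for the $\xpartial{x_i}$ generators it follows from the relation $\binom{m}{j+e_i}(j_i+1) = \binom{m}{j}(m_i - j_i)$ after reindexing. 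Finally, since both maps are homomorphisms, to conclude that they are mutually inverse it is enough to check $\psi\varphi = \mathrm{id}$ and $\varphi\psi = \mathrm{id}$ on the generators $A, \cV$ and $\cD, \cL_+$ respectively. The computation $\psi\varphi(x^k\xpartial{x_p}) = 1 \# (x^k\xpartial{x_p})$ collapses to the inclusion-exclusion identity $\sum_{l \le m \le k} (-1)^{m-l}\binom{k}{m}\binom{m}{l} = \binom{k}{l}(1-1)^{k-l} = \delta_{k,l}$, and $\varphi\psi = \mathrm{id}$ is similar. As a sanity check on injectivity and surjectivity I would note that filtering both algebras and passing to the associated graded matches $A \otimes S(\cV)$ with $(A \otimes \bk[\partial]) \otimes S(\cL_+)$ via the vector-space splitting $\cV \cong \rmspan\{\xpartial{x_i}\} \oplus \cL_+$, so a homomorphism that is bijective on generators is forced to be an isomorphism.
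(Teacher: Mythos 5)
The paper does not prove this statement: Theorem \ref{theorem:avisisotodtensorlplus} is imported verbatim from \cite{XL23,BIN23} and used as a black box, so there is no internal proof to compare against. Judged on its own merits, your outline is correct and is essentially the direct-verification argument found in those references. The decomposition $\varphi(\eta)=a_\eta+b_\eta$ does make the Leibniz relation trivial, the bracket check does reduce to a Vandermonde convolution once the $m=0$ (resp.\ $m'=0$) boundary terms contributed by $a_\eta$ (resp.\ $a_\mu$) are folded in (explicitly, $\binom{l}{m''}(l-m'')_p+\sum_{m+m'=m''+e_p,\,m,m'>0}\binom{k}{m}\binom{l}{m'}m'_p=l_p\binom{k+l-e_p}{m''}$ after factoring out $l_p$), the commutation of $\psi(\cD)$ with $\psi(\cL_+)$ follows from the two binomial identities you cite, and $\psi\varphi=\mathrm{id}$ on $x^k\xpartial{x_p}$ collapses to $\sum_{l\le m\le k}(-1)^{m-l}\binom{k}{m}\binom{m}{l}=\delta_{k,l}$ with the $l=0$ stratum cancelling the $x^k\#\xpartial{x_p}$ term. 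The one step you should make explicit rather than assert is that ``$A\#U(\cV)$ is generated by $A$ and $\cV$ subject to those relations'': what you are really invoking is the universal property of the smash product (an algebra map $A\to B$ together with a Lie map $\cV\to B$ satisfying $[\beta(\eta),\alpha(f)]=\alpha(\eta(f))$ extends uniquely to $A\#U(\cV)$, defined by $f\#u\mapsto\alpha(f)\tilde\beta(u)$), and the analogous universal properties of $\cD$ and $\cD\otimes U(\cL_+)$ on the other side; once these are in place, your closing remark about the associated graded is unnecessary, since two mutually inverse homomorphisms are automatically isomorphisms.
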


An $A\cV$-module is a module over the algebra $A \# U(\cV)$. Equivalently, an $A\cV$-module $M$ is a module over both $A$ and $\cV$ satisfying the Leibniz rule
\[
\eta \cdot (f \cdot m) = \eta(f) \cdot m + f \cdot (\eta \cdot m)
\]
for all $\eta \in \cV$, $f \in A$ and $m \in M$. We will heavily rely on Theorem \ref{theorem:avisisotodtensorlplus} to study $A\cV $-modules.

\begin{example}
Let $P$ be a $\cD$-module and $Q$ an $\cL_+$-module, then, by Theorem \ref{theorem:avisisotodtensorlplus}, $P \otimes Q$ is an $A\cV$-module with the action
\begin{align*}
x^k (p \otimes q ) &= x^k p \otimes q, \\
x^k \xpartial{x_i} (p \otimes q ) &= \left (x^k \xpartial{x_i} p \right ) \otimes q 
+ \sum_{0 < m \leq k } \binom{k}{m} \left ( x^{k-m} p \right )\otimes X^{m} \xpartial{X_i} q,
\end{align*}
for  $p\in P, q \in Q$. 
\end{example}

\section{Gelfand-Kirillov dimension of $A\cV$-modules}\label{section:gkdimofavmod}

Let $R$ be a finitely generated unital associative algebra and $M$ be a finitely generated left $R$-module. Suppose that $C \subset R$ is a finite-dimensional generating subspace of $R$ containing $1$ and $M_0$ is a finite-dimensional generating subspace of $M$. The subspace $C^m$ spanned by the products of elements of $C$ of length $m$, is a finite-dimensional vector space and defines an increasing filtration on $R$
\[
C \subset C^2 \subset C^3 \subset 
\]
such that $\displaystyle R = \bigcup_{m \geq 1} C^m$. Similarly, $M_k = C^kM_0 = \left \{rv  \mid r \in C^k, \ v \in M  \right \}$ defines an increasing filtration on $M$ such that $C^mM_k \subset M_{k+m}$ for every $m,k \geq 1$. The Gelfand-Kirillov dimension $\GKdim_{R}(M)$ of $M$ is the superior limit 
\[
\GKdim_R (M) = \mathop{\overline{\lim}}\limits_{m  \rightarrow \infty} \log_m(\dim C^m M_0 ) = \lim_{k \rightarrow \infty} \sup_{m \geq k} \log_m(\dim C^m M_0 ).
\]
It is well-known that the Gelfand-Kirillov dimension does not depend on the choice of $C$ or $M_0$.

\begin{example}\label{example:bernsteininequalityfordmodules}
Let $M$ be a non-zero finitely generated $\cD$-module, then it is well-known that $ n \leq \GKdim_{\cD}(M) \leq 2n$, see \cite[Theorem 9.4.2]{Cou95} for instance. This inequality is called the Bernstein inequality.
\end{example}
\begin{lemma}\label{lemma:gkdimlessthanoneimpliesfd}
Let $R$ be a finitely generated associative algebra and $M$ be a finitely generated module. If $\GKdim_R(M) < 1$, then $\dim M < \infty$. 
\end{lemma}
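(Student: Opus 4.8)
The plan is to argue by contraposition: assuming $\dim M = \infty$, I will show that $\GKdim_R(M) \geq 1$. Fix a finite-dimensional generating subspace $C \subseteq R$ with $1 \in C$ and a finite-dimensional generating subspace $M_0$ of $M$, and set $M_k = C^k M_0$ as in the definition above. Because $1 \in C$, these subspaces form an increasing chain $M_0 \subseteq M_1 \subseteq M_2 \subseteq \cdots$ whose union is all of $M$, since $M$ is generated by $M_0$ over $R = \bigcup_m C^m$. Write $d(m) = \dim_{\bk} C^m M_0 = \dim M_m$.

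The key step is to show that if this chain ever stabilizes, then $M$ is finite-dimensional. Indeed, suppose $M_m = M_{m+1}$ for some $m$. Since $M_{m+1} = C \cdot C^m M_0 = C M_m$, the equality gives $C M_m = M_m$, and iterating yields $C^k M_m = M_m$ for all $k \geq 0$. Consequently $M_j = C^{j-m} M_m = M_m$ for every $j \geq m$, so $M = \bigcup_j M_j = M_m$ is finite-dimensional. Taking the contrapositive, if $M$ is infinite-dimensional then $M_m \subsetneq M_{m+1}$ for every $m$, and since the $d(m)$ are integers this forces $d(m+1) \geq d(m) + 1$.

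From here the conclusion is immediate. By induction $d(m) \geq d(0) + m \geq m + 1 > m$ (note $d(0) = \dim M_0 \geq 1$ as $M \neq 0$), so $\log_m d(m) > \log_m m = 1$ for all $m \geq 2$. Therefore
\[
\GKdim_R(M) = \mathop{\overline{\lim}}\limits_{m \to \infty} \log_m d(m) \geq 1,
\]
contradicting the hypothesis $\GKdim_R(M) < 1$. Hence $M$ must be finite-dimensional.

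I expect the only delicate point to be the stabilization argument in the second paragraph; everything else is a direct computation with the dimension sequence. The role of the hypothesis $1 \in C$ is essential there, both to guarantee that the chain is increasing and to run the iteration $C M_m = M_m \Rightarrow C^k M_m = M_m$.
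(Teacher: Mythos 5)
Your proof is correct and follows essentially the same route as the paper's: both rest on the dichotomy that the filtration $C^m M_0$ either strictly increases forever (forcing $\dim C^m M_0 \geq m$ and hence $\GKdim_R(M) \geq 1$) or stabilizes (forcing $M = C^m M_0$ to be finite-dimensional). Your treatment of the stabilization step, iterating $C M_m = M_m$, is just a more explicit version of the paper's one-line observation that $C^m M_0$ is an invariant generating subspace.
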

\begin{proof}
Let $C$ be a generating finite-dimensional subspace of $R$ containing $1$ and $M_0$ a finite-dimensional generating subspace of $M$ as a $R$-module. We have that $\dim C^mM_0 \leq \dim C^{m+1} M_0$ for every $m \geq 0$. If $\dim C^m M_0 < \dim C^{m+1}M_0$ for evert $m \geq 0$, then $\dim C^mM_0 \geq m$. Hence, $\GKdim_R(M) \geq \log_m(m) =1$. Therefore, there exists $m$ such that $C^m M_0= C^{m+1}$. Because $C^mM_0$ is an invariant subspace of $M$ that contains a generating set of $M$, $M = C^m M_0$. We conclude that $\dim M = \dim C^m M_0 < \infty$.
\end{proof}

We can use this lemma to give a description of $\cL_+$-modules with zero Gelfand-Kirillov dimension. 

\begin{corollary}\label{lemma:gkdimlessthanoneimpliesfdforlplus}
Let $W$ be a finitely generated $\cL_+$-module with $\GKdim(W) =0$, then $W$ is a finite-dimensional $\cL_+$-module. 
\end{corollary}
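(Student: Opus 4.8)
The plan is to deduce this directly from Lemma \ref{lemma:gkdimlessthanoneimpliesfd} applied to $R = U(\cL_+)$ and $M = W$: since $\GKdim(W) = 0 < 1$, the lemma immediately yields $\dim W < \infty$. The only hypothesis of the lemma that is not automatic is that $R = U(\cL_+)$ is a finitely generated associative algebra—something which is in fact needed even to make the notion $\GKdim(W)$ meaningful within the framework of Section \ref{section:gkdimofavmod}. Since $U(\cL_+)$ is finitely generated as an associative algebra as soon as $\cL_+$ is finitely generated as a Lie algebra, the whole proof reduces to establishing the latter.

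To prove that $\cL_+$ is finitely generated I would show, for $n \geq 2$, that it is generated as a Lie algebra by the finite-dimensional subspace $\cL_0 \oplus \cL_1$, arguing by induction on the degree $d$ that $\cL_d$ lies in the subalgebra generated by $\cL_0\oplus\cL_1$. The inductive engine is bracketing with the degree-one elements $X_j \epsilon \in \cL_1$, where $\epsilon = \sum_l X_l \xpartial{X_l}$ is the Euler operator; a direct computation gives, for $X^k\xpartial{X_i}\in\cL_{d-1}$ (so that $|k| = d$),
\[
\left[ X_j\epsilon,\, X^k \xpartial{X_i}\right] = (|k|-1)\, X_j X^k \xpartial{X_i} - \delta_{ij}\, X^k \epsilon .
\]
Taking $i \neq j$ and $d \geq 2$ (so that $|k| - 1 = d-1 \neq 0$) shows that every monomial field $X_j X^k \xpartial{X_i}$ with $j \neq i$ lies in the generated subalgebra as soon as $X^k\xpartial{X_i} \in \cL_{d-1}$ does. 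Hence all degree-$d$ basis vectors $X^m \xpartial{X_i}$ whose monomial $X^m$ involves at least one variable other than $X_i$ are captured by the induction hypothesis.

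The main obstacle is the remaining pure-power fields $X_i^{d+1}\xpartial{X_i}$, for which the identity above degenerates because the term $\delta_{ij} X^k\epsilon$ no longer vanishes. I would dispatch these by a secondary computation: the same identity with $j = i$ expresses $(d-2)\,X_i^{d+1}\xpartial{X_i}$ as a bracket plus a sum of already-captured mixed fields $X_i^{d} X_l \xpartial{X_l}$ ($l \neq i$), which settles all $d \geq 3$; the borderline case $X_i^{3}\xpartial{X_i}$ is obtained directly from $\left[ X_i^2 \xpartial{X_j},\, X_i X_j \xpartial{X_i}\right] = X_i^3 \xpartial{X_i} - 2 X_i^2 X_j \xpartial{X_j}$ with $j \neq i$, the last term again being already captured. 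These degenerate low-degree brackets, together with the separate and elementary verification that for $n=1$ the algebra $\cL_+$ is generated by $\cL_1 \oplus \cL_2$, are the only genuinely fiddly points; once the finite generation of $\cL_+$ is in hand, the corollary follows at once from Lemma \ref{lemma:gkdimlessthanoneimpliesfd}.
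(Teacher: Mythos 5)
Your proposal is correct and follows essentially the same route as the paper: the corollary is an immediate application of Lemma \ref{lemma:gkdimlessthanoneimpliesfd} to $R=U(\cL_+)$, with the only substantive input being the finite generation of $\cL_+$ as a Lie algebra, which the paper records as Lemma \ref{lemma:generatorsoflplus} and leaves to the reader and which you work out in detail (your bracket computations with the Euler field check out). One small correction: for $n=1$ the subspace $\cL_1\oplus\cL_2$ does \emph{not} generate $\cL_+$, since $\cL_0=\bk\, X_1\xpartial{X_1}$ cannot arise as a bracket of elements of positive degree in the graded subalgebra $\bigoplus_{d\geq 1}\cL_d$; you must take $\cL_0\oplus\cL_1\oplus\cL_2$ as in the paper's definition of $F'$, which of course does not affect the finite-generation conclusion or the corollary.
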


Let $B$ the subspace of $\cD$ generated by $1$, $x_1,\dots,x_n$ and $\xpartial{x_1}, \dots, \xpartial{x_n}$, then $B$ defines a filtration on $\cD$ called the \emph{Bernstein filtration}. Define 
\[
F'= \begin{cases} \cL_0 \oplus \cL_1\oplus \cL_2 & \text{if} \ n=1,\\
\cL_0 \oplus \cL_1 & \text{if} \ n>1.
\end{cases}
\]
The proof of the following lemma is straightforward and is left for the reader.
\begin{lemma}\label{lemma:generatorsoflplus}
$F'$ generates $\cL_+$ as a Lie algebra. In particular, if $n>1$, then the subalgebra $\displaystyle \cL_{++} = \bigoplus_{k \geq 2} \cL_k$ is generated by $\cL_{1}$.
\end{lemma}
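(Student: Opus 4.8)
The plan is to exploit the grading $\cL_+ = \bigoplus_{d\ge 0}\cL_d$, under which $[\cL_a,\cL_b]\subseteq\cL_{a+b}$, and to reduce everything to a single \emph{transitivity} statement: for $n>1$, the bracket map $\cL_1\otimes\cL_k\to\cL_{k+1}$ is surjective for every $k\ge 1$. Granting this, a one-step induction proves the lemma, since the Lie subalgebra generated by $\cL_0\oplus\cL_1$ contains $\cL_0$, $\cL_1$, and then each $\cL_{k+1}=[\cL_1,\cL_k]$, hence all of $\cL_+$. Running the same induction starting from $\cL_1$ alone shows that every homogeneous piece $\cL_k$ with $k\ge 2$ lies in the subalgebra generated by $\cL_1$, which is the ``in particular'' clause. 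Note that $\cL_0$ must be retained as a generator, because brackets of positive-degree elements never reach degree $0$.

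For $n=1$ the transitivity argument fails at the bottom: $\cL_1=\bk\,X^2\xpartial{X}$ is one-dimensional and abelian, so $[\cL_1,\cL_1]=0\ne\cL_2$, which is precisely why $\cL_2$ is adjoined to $F'$ in this case. Here I would instead use the Witt relation $[X^{a+1}\xpartial{X},X^{b+1}\xpartial{X}]=(b-a)X^{a+b+1}\xpartial{X}$. Since $[\cL_1,\cL_k]=(k-1)\cL_{k+1}$ with $k-1\ne 0$ for $k\ge 2$, the two elements $X^2\xpartial{X}$ and $X^3\xpartial{X}$ generate every $\cL_k$ with $k\ge 1$, while $\cL_0$ supplies degree $0$.

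The substance is the transitivity for $n>1$, which I would establish by showing that each basis vector field $X^m\xpartial{X_p}$ with $|m|=k+2$ lies in $[\cL_1,\cL_k]$; write $e_i$ for the $i$-th standard multi-index. The workhorse is the clean commutator
\[
\left[X_aX_b\xpartial{X_p},\,X^{\ell}\xpartial{X_p}\right]=(m_p+1)\,X^{m}\xpartial{X_p},\qquad \ell=m-e_a-e_b+e_p,
\]
valid whenever $a,b\ne p$ (so that the two correction terms of the general commutator drop out) and $\ell$ is a genuine multi-index; this disposes of every target with $\sum_{i\ne p}m_i\ge 2$. The main obstacle is the two degenerate families in which the exponent concentrates at the differentiated variable, namely $X_p^{k+1}X_q\xpartial{X_p}$ and $X_p^{k+2}\xpartial{X_p}$ (with some $q\ne p$, which exists since $n>1$). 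For the first I would use $[X_pX_q\xpartial{X_p},X_p^{k+1}\xpartial{X_p}]=k\,X_p^{k+1}X_q\xpartial{X_p}$, nonzero as $k\ge1$. For the second, $[X_p^2\xpartial{X_q},X_p^{k}X_q\xpartial{X_p}]=X_p^{k+2}\xpartial{X_p}-2X_p^{k+1}X_q\xpartial{X_q}$, where the contaminating term $X_p^{k+1}X_q\xpartial{X_q}$ already falls under the workhorse case (its index is $q$ and $\sum_{i\ne q}m_i=k+1\ge 2$); subtracting it finishes the argument. The only delicate bookkeeping is checking that these few explicit commutators exhaust all $X^m\xpartial{X_p}$, which is a short case split on the value of $m_p$ and which works verbatim for the base case $k=1$, yielding $[\cL_1,\cL_1]=\cL_2$.
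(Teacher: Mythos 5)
Your proof is correct; the paper itself gives no argument (it declares the lemma straightforward and leaves it to the reader), and your induction on the grading via the surjectivity of $[\cL_1,\cdot]:\cL_k\to\cL_{k+1}$ for $n>1$, together with the Witt-algebra relation for $n=1$, is exactly the computation being omitted. I checked the three explicit commutators (the workhorse identity and the two degenerate families concentrating at the differentiated variable) and the case split on $\sum_{i\ne p}m_i$ is exhaustive, so the argument is complete.
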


Because $F'$ generates $\cL_{+}$ as a Lie algebra, $F =F' \cup \{1 \}$ generates the associative algebra $U(\cL_+)$. Therefore, both $\cD$ and $U(\cL_+) $ are finitely generated algebras, hence $D \otimes U(\cL_+)$ is a finitely generated algebra as well.  Let $G = B \otimes 1 + 1 \otimes F$. Then $G$ is a finite-dimensional generating subspace of $D \otimes U(\cL_+)$.

\begin{lemma}\label{lemma:bernsteininequalityforav}
If $M$ is a non-zero finitely generated $A\cV$-module, then $\GKdim_{A\cV} (M) \geq n$.
\end{lemma}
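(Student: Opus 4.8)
The plan is to reduce the statement to the Bernstein inequality for the Weyl algebra recorded in Example \ref{example:bernsteininequalityfordmodules}. Using the isomorphism of Theorem \ref{theorem:avisisotodtensorlplus}, I regard $M$ as a module over $\cD \otimes U(\cL_+)$, and then use the algebra embedding $\cD \hookrightarrow \cD \otimes U(\cL_+)$, $d \mapsto d \otimes 1$, to view $M$ simultaneously as a $\cD$-module. The one genuine subtlety, which I expect to be the only real obstacle, is that $M$ need not be finitely generated as a $\cD$-module, since the factor $U(\cL_+)$ may force infinitely many $\cD$-generators. I will circumvent this by never treating all of $M$ as a $\cD$-module, but only passing to a single finitely generated $\cD$-submodule, for which the filtration comparison is exact.

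Concretely, fix a finite-dimensional generating subspace $M_0 \subseteq M$ for the $A\cV$-module structure; since $M \neq 0$ we may assume $M_0 \neq 0$. First I would form $N = \cD M_0$, the $\cD$-submodule of $M$ generated by $M_0$. This is a non-zero, finitely generated $\cD$-module, so Example \ref{example:bernsteininequalityfordmodules} yields $\GKdim_{\cD}(N) \geq n$. Because $B$ is a generating subspace of $\cD$ containing $1$ (the Bernstein filtration) and $M_0$ generates $N$, the dimension $\GKdim_{\cD}(N)$ is computed by the filtration $B^m M_0$, i.e. $\GKdim_{\cD}(N) = \overline{\lim}_{m\to\infty}\log_m(\dim B^m M_0)$.

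Next I compare the two filtrations. Since $(b \otimes 1)(b' \otimes 1) = bb' \otimes 1$, we have $(B \otimes 1)^m = B^m \otimes 1$, and because $B \otimes 1 \subseteq G$ this gives $B^m \otimes 1 \subseteq G^m$ for every $m$. Under the $\cD$-action $d \cdot v = (d \otimes 1)v$, the subspace $B^m M_0$ computed inside $N$ coincides with $(B^m \otimes 1)M_0 \subseteq G^m M_0$, whence $\dim G^m M_0 \geq \dim B^m M_0$ for all $m$. Applying $\log_m$ (which is increasing) and taking the upper limit (which is monotone) then gives
\[
\GKdim_{A\cV}(M) = \overline{\lim}_{m\to\infty}\log_m(\dim G^m M_0) \geq \overline{\lim}_{m\to\infty}\log_m(\dim B^m M_0) = \GKdim_{\cD}(N) \geq n.
\]
This closes the argument. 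The only delicate point is the reduction in the first step to $N = \cD M_0$ rather than to $M$ as a whole; once that is arranged, the proof is a direct comparison of filtrations combined with the Weyl-algebra Bernstein inequality.
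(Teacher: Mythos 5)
Your proof is correct and follows essentially the same route as the paper: both compare the Bernstein filtration $B^m M_0$ with the filtration $G^m M_0$ and invoke the Bernstein inequality for the finitely generated $\cD$-module $\cD M_0$. Your write-up is in fact slightly more careful than the paper's, since you make explicit that one must pass to the $\cD$-submodule $\cD M_0$ rather than treat all of $M$ as a $\cD$-module.
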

\begin{proof}
 Suppose that $M_0$ is a finite-dimensional generating subspace of $M$ as a module over $A\# U(\cV) \cong D \otimes U(\cL_+)$. Then,
\[
B^m M_0 \subset G^m M_0.
\]
Therefore, $\GKdim_{\cD} (\cD M_0) \leq \GKdim_{A\cV}(M)$. 
Since $\GKdim_{\cD}(M_0) \geq n$, we obtain the claim of the Lemma.
\end{proof}

\begin{lemma}\label{lemma:gkdimoftensorproduct}
 Let $P$ be a finitely generated $\cD$-module and $Q$ be a finitely generated $\cL_+$-module. Then, \[\GKdim_{A\cV}( P \otimes Q) = \GKdim_{\cD}(P) + \GKdim_{U(\cL_+)}(Q). \]
\end{lemma}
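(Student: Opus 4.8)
The plan is to pass through the isomorphism of Theorem \ref{theorem:avisisotodtensorlplus}, which identifies $P \otimes Q$ with the external tensor product of the $\cD$-module $P$ and the $U(\cL_+)$-module $Q$, viewed as a module over $\cD \otimes U(\cL_+)$. I would reuse the generating subspaces already fixed in the excerpt: the Bernstein filtration subspace $B \subset \cD$, the subspace $F \subset U(\cL_+)$, and $G = B \otimes 1 + 1 \otimes F$ for the product algebra. Choosing finite-dimensional generating subspaces $P_0$ of $P$ and $Q_0$ of $Q$, the space $P_0 \otimes Q_0$ generates $P \otimes Q$. The first step is the elementary but essential observation that $B \otimes 1$ and $1 \otimes F$ commute in $\cD \otimes U(\cL_+)$, so that $G^m = \sum_{i+j=m} B^i \otimes F^j$; applying this to $P_0 \otimes Q_0$ and using that the filtrations are increasing (as $1 \in B$ and $1 \in F$) gives
\[
G^m(P_0 \otimes Q_0) = \sum_{i+j \le m} (B^i P_0) \otimes (F^j Q_0).
\]

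Next I would turn this into a dimension count. Writing $a(i) = \dim B^i P_0$ and $b(j) = \dim F^j Q_0$, and choosing bases of $P$ and $Q$ adapted to the increasing filtrations $\{B^i P_0\}$ and $\{F^j Q_0\}$ (assigning to each basis vector the least filtration level in which it appears), the subspace above is spanned by the linearly independent tensors $e \otimes f$ with $\mathrm{lev}(e) + \mathrm{lev}(f) \le m$. Hence, with $\alpha(p) = a(p) - a(p-1)$ and $\beta(q) = b(q) - b(q-1)$,
\[
\dim G^m(P_0 \otimes Q_0) = \sum_{p+q \le m} \alpha(p)\,\beta(q).
\]
The upper bound is then immediate: this sum is at most $\bigl(\sum_{p \le m}\alpha(p)\bigr)\bigl(\sum_{q \le m}\beta(q)\bigr) = a(m)\,b(m)$, and since $\log_m(a(m)b(m)) = \log_m a(m) + \log_m b(m)$, taking $\overline{\lim}$ and using subadditivity of $\overline{\lim}$ gives $\GKdim_{A\cV}(P \otimes Q) \le \GKdim_{\cD}(P) + \GKdim_{U(\cL_+)}(Q)$.

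For the lower bound I would restrict the sum to the quadrant $p, q \le \lfloor m/2 \rfloor$, which keeps $p + q \le m$, to obtain $\dim G^m(P_0 \otimes Q_0) \ge a(\lfloor m/2 \rfloor)\,b(\lfloor m/2 \rfloor)$. Taking $\log_m$ and using that $\ln\lfloor m/2\rfloor / \ln m \to 1$, this yields
\[
\log_m \dim G^m(P_0 \otimes Q_0) \ge \log_m a(\lfloor m/2 \rfloor) + \log_m b(\lfloor m/2 \rfloor),
\]
where the $\overline{\lim}$ of the two summands equals $\GKdim_{\cD}(P)$ and $\GKdim_{U(\cL_+)}(Q)$ respectively. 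The obstacle is that the $\overline{\lim}$ of a sum is in general only bounded below by the sum of an $\overline{\lim}$ and a $\underline{\lim}$, so this argument alone produces $\GKdim_{A\cV}(P\otimes Q) \ge \GKdim_{\cD}(P) + \underline{\GKdim}_{U(\cL_+)}(Q)$, and for arbitrary finitely generated algebras the resulting inequality can be strict.

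The point that rescues the equality here is the special structure of the Weyl algebra: with the Bernstein filtration, $a(m) = \dim B^m P_0$ agrees for large $m$ with the Hilbert polynomial of the associated graded module over $\mathrm{gr}\,\cD \cong \bk[x_1,\dots,x_n,\xi_1,\dots,\xi_n]$, so that $\log_m a(m)$ converges to $\GKdim_{\cD}(P)$ as a genuine limit rather than merely a $\overline{\lim}$. Consequently $\log_m a(\lfloor m/2\rfloor)$ also converges, to $\GKdim_{\cD}(P)$, and I may pull this convergent summand out of the $\overline{\lim}$ of the displayed sum, obtaining the lower bound $\GKdim_{\cD}(P) + \GKdim_{U(\cL_+)}(Q)$. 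Combined with the upper bound this proves the claimed equality. I expect this last step — identifying and invoking the polynomial growth of the Bernstein dimension of a $\cD$-module to upgrade the $\underline{\lim}$ to a genuine limit — to be the crux of the argument.
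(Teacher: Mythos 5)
Your proposal is correct and follows essentially the same route as the paper: both sandwich the filtration of $P\otimes Q$ between products of the Bernstein filtration on $P$ and the $F$-filtration on $Q$ (your exact decomposition $G^m(P_0\otimes Q_0)=\sum_{i+j\le m}B^iP_0\otimes F^jQ_0$ versus the paper's inclusions $B^mP_0\otimes F^mQ_0\subset G^{2m}(P_0\otimes Q_0)\subset B^{2m}P_0\otimes F^{2m}Q_0$ carry the same content), and both resolve the $\overline{\lim}$-of-a-sum issue by invoking the eventual polynomiality of $\dim B^mP_0$ from \cite[Chapter 9]{Cou95} so that the $\cD$-factor contributes a genuine limit. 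The crux you identify is exactly the one the paper uses.
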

\begin{proof}
Let $P_0 \subset P$ and $Q_0 \subset Q$ be finite-dimensional generating subspace of $P$ and $Q$ as a $\cD$-module and $U(\cL_+)$-module, respectively. We have that $P_0 \otimes Q_0$ is a finite-dimensional generating subspace of $P\otimes Q$ as $\cD \otimes U(\cL_+)$-module and
\[
B^m P_0 \otimes F^m G \subset G^{2m} (P_0 \otimes Q_0) \subset B^{2m} P_0 \otimes F^{2m}Q_0.
\]
Hence,
\[
\dim(B^m P_0) \dim(F^m Q_0 ) \leq \dim (G^{2m} (P_0 \otimes Q_0)) \leq \dim (B^{2m} P_0)\dim (F^{2m}Q_0).
\]
 By \cite[Chapter 9]{Cou95}, the dimension of $B^m P$ for large values of $m$ is given by a polynomial, thus the Gelfand-Kirillov dimension is the degree of this polynomial. Hence,
 \[
 \GKdim_{\cD} (P) = \lim_{m \rightarrow \infty} \log_m B^mP_0.
 \]
Furthermore, if $(x_k)$ and $(y_k)$ are sequences of real numbers such that $(y_k)$ converges, then 
 \[
 \mathop{\overline{\lim}}\limits_{k  \rightarrow \infty} (x_k+y_k)  = \mathop{\overline{\lim}}\limits_{k  \rightarrow \infty} \, x_k + \lim_{k \rightarrow \infty} y_k 
 \]
 Finally, because 
 \[
 \mathop{\overline{\lim}}\limits_{m  \rightarrow \infty} \log_{m} x_m =\mathop{\overline{\lim}}\limits_{m  \rightarrow \infty} \frac{\log x_{2m}}{\log 2m} =  \mathop{\overline{\lim}}\limits_{m  \rightarrow \infty} \frac{\log x_{2m}}{\log m + \log 2} = \mathop{\overline{\lim}}\limits_{m  \rightarrow \infty}\frac{\log x_{2m}}{\log m} = \mathop{\overline{\lim}}\limits_{m  \rightarrow \infty} \log_m(x_{2m}),
 \]
 we get that
 \begin{align*}
    \GKdim_{A\cV}(P \otimes Q)    = &\mathop{\overline{\lim}}\limits_{m  \rightarrow \infty} \log_{m} \dim(G^{2m} (P_0 \otimes Q_0)) \\
    = & \mathop{\overline{\lim}}\limits_{m  \rightarrow \infty} \log_m \left (  \dim(B^mP_0) \dim(F^m Q_0 ) \right ) \\
    = & \mathop{\overline{\lim}}\limits_{m  \rightarrow \infty} \left ( \log_m  \dim(B^mP_0)  + \log_m \dim(F^m Q_0  ) \right) \\
    = & \lim_{m \rightarrow \infty } \log_m  \dim(B^m P_0) + \mathop{\overline{\lim}}\limits_{m  \rightarrow \infty} \log_m \dim(F^m Q_0 ) \\
    = & \GKdim_{\cD}(P) + \GKdim_{U(\cL_+)}(Q).
 \end{align*}
\end{proof}

\section{Holonomic $A\cV$-modules}\label{section:holonomicavmod}

In Section \ref{section:gkdimofavmod}, we proved that $\GKdim_{A\cV}(M) \geq n$ if $M$ is a finitely generated $A\cV$-module. We say that a finitely generated $A\cV$-module $M$ is \emph{holonomic} if $M =0$ or $\GKdim_{A\cV}(M) = n$.

\begin{lemma}\label{lemma:exactgkdim}
If $M$ is a holonomic $A\cV$-module and $N \subset M$ is a submodule, then $N$ and $M/N$ are holonomic. Furthermore, finite sums of holonomic $A\cV$-modules are holonomic.
\end{lemma}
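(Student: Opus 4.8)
The statement to prove is Lemma~\ref{lemma:exactgkdim}: holonomicity is preserved under submodules, quotients, and finite sums. The plan rests on the standard behavior of Gelfand--Kirillov dimension with respect to short exact sequences, combined with the Bernstein inequality established in Lemma~\ref{lemma:bernsteininequalityforav}.

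\medskip

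The plan is to proceed in three steps. First, for the submodule and quotient claims, I would use the general fact that for a short exact sequence $0 \to N \to M \to M/N \to 0$ of finitely generated modules over a finitely generated algebra, one has
\[
\GKdim_{A\cV}(M) = \max\left\{ \GKdim_{A\cV}(N), \, \GKdim_{A\cV}(M/N) \right\}.
\]
A subtle point I must address is that $N$, being a submodule of a Noetherian-type module, is itself finitely generated; since $A\cV \cong \cD \otimes U(\cL_+)$ and both factors are Noetherian, the tensor product is Noetherian, so $N$ is finitely generated and the GK-dimension is defined for it. Granting the max formula, since $M$ is holonomic we have $\GKdim_{A\cV}(M) = n$, so both $\GKdim_{A\cV}(N)$ and $\GKdim_{A\cV}(M/N)$ are at most $n$. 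By the Bernstein inequality (Lemma~\ref{lemma:bernsteininequalityforav}), any nonzero finitely generated $A\cV$-module has GK-dimension at least $n$; hence each of $N$ and $M/N$ is either zero or has GK-dimension exactly $n$, which is precisely the definition of holonomic.

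\medskip

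Second, for the finite sums claim, it suffices by induction to treat the sum of two holonomic modules $M_1 + M_2$ inside some ambient module. Here I would use that $M_1 + M_2$ is a quotient of $M_1 \oplus M_2$, and that $\GKdim_{A\cV}(M_1 \oplus M_2) = \max\{\GKdim_{A\cV}(M_1), \GKdim_{A\cV}(M_2)\} = n$ (this max formula for direct sums follows by choosing the direct sum of generating subspaces). Thus $M_1 \oplus M_2$ is holonomic, and by the quotient part already proved, its quotient $M_1 + M_2$ is holonomic as well.

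\medskip

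The main obstacle is justifying the short exact sequence formula for GK-dimension in this setting. The subadditivity direction $\GKdim_{A\cV}(M) \leq \max\{\GKdim_{A\cV}(N), \GKdim_{A\cV}(M/N)\}$ is the delicate one: one fixes a finite-dimensional generating subspace $M_0$ of $M$, sets $G$ as in the excerpt, and must compare $\dim(G^m M_0)$ with the corresponding growth functions for $N$ and $M/N$ using the filtration inherited on $N$ and the image filtration on $M/N$, together with the inequality $\dim(G^m M_0) \leq \dim(G^m M_0 \cap N) + \dim(\text{image in } M/N)$. The opposite inequality $\GKdim_{A\cV}(M) \geq \max\{\cdots\}$ follows from monotonicity of GK-dimension under submodules and quotients. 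Rather than reprove these standard facts, I would invoke the general theory of GK-dimension (for instance from Krause--Lenagan), noting only that their hypotheses---$A\cV$ finitely generated and all modules finitely generated---are satisfied here. Once the max formula is in hand, every assertion in the lemma reduces to combining it with the Bernstein inequality, so the remainder is routine.
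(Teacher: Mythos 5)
Your core argument is the same as the paper's: monotonicity of Gelfand--Kirillov dimension under submodules and quotients, combined with the Bernstein inequality of Lemma~\ref{lemma:bernsteininequalityforav}, forces each of $N$, $M/N$, and a finite sum to have dimension either $0$ or exactly $n$; the finite-sum case is handled by realizing the sum as a quotient of a direct sum, which is also what the citation of \cite[Proposition 5.1]{KL00} amounts to. So the proposal succeeds by the same route. Note, however, that you only ever use the inequality $\max\{\GKdim(N),\GKdim(M/N)\}\leq\GKdim(M)$; the ``subadditivity'' direction you single out as delicate is never needed, and it is in fact \emph{false} in the generality you claim (Gelfand--Kirillov dimension is not exact for arbitrary finitely generated modules over finitely generated algebras; exactness requires extra hypotheses such as an almost commutative or filtered-Noetherian structure). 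It is fortunate that your deductions never rely on it, but as written the proof asserts a general fact that does not hold.

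A second justification is also incorrect: $U(\cL_+)$ is not Noetherian. It is the enveloping algebra of an infinite-dimensional Lie algebra (for $n=1$ it contains the positive part of the Witt algebra, whose enveloping algebra is known not to be Noetherian), so you cannot conclude from Noetherianity of $\cD\otimes U(\cL_+)$ that the submodule $N$ is finitely generated. This point is a genuine subtlety, since ``holonomic'' includes finite generation: the paper does not address it inside this proof, and instead obtains finite generation of submodules downstream, from the facts that holonomic modules are cyclic and hence Noetherian. If you want to keep your argument self-contained, the clean statement you can prove at this stage is that every nonzero finitely generated submodule of $M$ has Gelfand--Kirillov dimension exactly $n$, deferring finite generation of arbitrary submodules to the Noetherianity corollary.
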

\begin{proof}
    Let $M$ be a holonomic $A\cV$-module and $N \subset M$ a submodule of $M$. If $N$ is non-zero, $\GKdim_{A\cV} (N) \geq n$ by Lemma \ref{lemma:bernsteininequalityforav}. Since $N$ is a submodule of $M$,  $\GKdim_{A\cV} (N) \leq \GKdim_{A\cV}(M) = n$. Therefore, $\GKdim_{A\cV} (N) = n$. By the properties of the Gelfand-Kirillov dimension (see \cite[Proposition 5.1]{KL00}), $\GKdim_{A\cV} (M/N) \leq \GKdim_{A\cV} (M)=n$. By Lemma \ref{lemma:bernsteininequalityforav}, $\GKdim_{A\cV} (M/N)$ is either zero or $n$. In either case, $M/N$ is holonomic. 
    
    For the second claim of the Lemma, by \cite[Proposition 5.1]{KL00} and Lemma \ref{lemma:bernsteininequalityforav}, if $M_1,\dots,M_r$ are non-zero holonomic $A\cV$-modules and $M'= \sum_{i=1}^r M_i$, then 
    \[
    n \leq \GKdim_{A\cV}(M') \leq \max_{i}\GKdim_{A\cV}(M_i) = n.
    \]
    Therefore, $ \GKdim_{A\cV}(M') =n$ and $M'$ is holonomic.
\end{proof}

\begin{proposition}\label{proposition:fingenamodarehol}
Let $M$ be an $A\cV$-module that is finitely generated as an $A$-module. Then, $M$ is a holonomic $A\cV$-module. 
\end{proposition}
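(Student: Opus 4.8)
The plan is to bound $\GKdim_{A\cV}(M)$ from above by $n$; together with the Bernstein inequality of Lemma~\ref{lemma:bernsteininequalityforav}, this forces $\GKdim_{A\cV}(M)=n$ whenever $M\neq 0$, while $M=0$ is holonomic by definition. Throughout I will compute the growth against a filtration coming from the $A$-module structure rather than the abstract one, exploiting that the Gelfand--Kirillov dimension is independent of the chosen generating subspace.

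First I would fix finitely many generators $m_1,\dots,m_k$ of $M$ over $A$ and set $M_0=\mathrm{span}(m_1,\dots,m_k)$. Since $A\subseteq A\#U(\cV)$, the space $M_0$ also generates $M$ over $A\cV$, so I may use it to compute $\GKdim_{A\cV}(M)$. Define the auxiliary filtration $\tilde M_p = A_{\leq p}M_0$, where $A_{\leq p}$ denotes polynomials of degree at most $p$; then $\dim\tilde M_p \leq k\dim A_{\leq p} = k\binom{p+n}{n} = O(p^n)$, which already encodes the target growth rate $n$.

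The heart of the argument is to show that every element of the finite-dimensional generating subspace $G = B\otimes 1 + 1\otimes F$ shifts this filtration by a bounded amount, i.e. $G\cdot\tilde M_p \subseteq \tilde M_{p+C}$ for a constant $C$ independent of $p$. For the Weyl-algebra generators this is routine: multiplication by $x_i$ raises the degree by one, while the vector field $\partial_{x_i}$ satisfies $\partial_{x_i}m_j = \sum_l a_{ijl}m_l$ for some $a_{ijl}\in A$, so by the Leibniz rule it raises the degree by at most $d_0 := \max_{i,j,l}\deg a_{ijl}$. The decisive point is the $\cL_+$-generators: by Lemma~\ref{lemma:generatorsoflplus} the set $F'$ lies in $\cL_0\oplus\cL_1$ (and $\cL_2$ when $n=1$), so every generator has the form $\psi(X^m\partial_{X_p})$ with $|m|\leq 3$. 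Using the explicit formula $\psi(X^m\partial_{X_p})=\sum_{0\leq j\leq m}(-1)^{|j|}\binom{m}{j}\,x^j\# x^{m-j}\partial_{x_p}$ of Theorem~\ref{theorem:avisisotodtensorlplus}, each summand is multiplication by a polynomial of degree $\leq|m|$ composed with the vector field $x^{m-j}\partial_{x_p}$ of bounded polynomial degree; applying the Leibniz estimate above termwise shows it maps $\tilde M_p$ into $\tilde M_{p+6+d_0}$. Taking $C=6+d_0$ then handles all of $G$ at once.

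Iterating the bounded shift yields $G^m M_0 \subseteq \tilde M_{Cm}$, hence $\dim G^m M_0 \leq k\binom{Cm+n}{n}=O(m^n)$ and $\GKdim_{A\cV}(M)\leq n$, completing the proof. I expect the only delicate point to be the uniform degree estimate for the $\cL_+$-generators, where it is essential that $F'$ consists of \emph{low-degree} elements: this is precisely the content of Lemma~\ref{lemma:generatorsoflplus}, and it is the structural reason why the infinite-dimensional factor $U(\cL_+)$ contributes nothing to the growth of a module that is already finitely generated over $A$.
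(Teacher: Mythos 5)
Your proof is correct in substance and arrives at the same underlying mechanism as the paper — every algebra generator shifts a degree filtration on $M$ by a uniformly bounded amount, so $\dim G^m M_0$ grows like the number of monomials of degree $O(m)$ in $n$ variables — but you get there by a genuinely more self-contained route. The paper first invokes the structure theorem of \cite{BIN23}, namely that an $A\cV$-module of finite type over $A$ is \emph{free} of finite rank with an explicit gauge-module action, and reads the linear degree bound off that formula; it also gets the matching lower bound for free, since $M_r \supseteq \{x^k \otimes w : |k| \le r\}$. You instead use only finite generation over $A$, the Leibniz rule, and the explicit form of $\psi$ from Theorem \ref{theorem:avisisotodtensorlplus}, and you import the lower bound from Lemma \ref{lemma:bernsteininequalityforav}. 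Your version buys independence from the classification result in \cite{BIN23}, at the cost of a slightly longer bookkeeping argument.

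One spot needs tightening. When you handle the $\cL_+$-generators you say that each summand $x^j \# x^{m-j}\partial_{x_p}$ is controlled by ``applying the Leibniz estimate above termwise,'' where that estimate was derived from the relations $\partial_{x_i} m_j = \sum_l a_{ijl} m_l$. But for a general $A\cV$-module the action of $x^{m-j}\partial_{x_p}$ on a generator $m_l$ is \emph{not} $x^{m-j}\cdot(\partial_{x_p} m_l)$ — the failure of $A$-linearity of $\rho$ is exactly what separates $A\cV$-modules from $\cD$-modules — so the constant $d_0$ as you defined it does not automatically bound these terms. The fix is immediate: there are only finitely many vector fields $x^{m-j}\partial_{x_p}$ occurring in the generating set $F'$ and finitely many $m_l$, so each element $(x^{m-j}\partial_{x_p})(m_l) \in M = A m_1 + \dots + A m_k$ has coefficients of some fixed degree, and you should simply enlarge $d_0$ to the maximum over this finite list before running the Leibniz computation $(g\partial_{x_p})(f m_l) = g\,\partial_{x_p}(f)\, m_l + f\cdot\bigl((g\partial_{x_p}) m_l\bigr)$. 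With that adjustment the uniform shift $G\cdot\tilde M_p \subseteq \tilde M_{p+C}$ holds and the rest of your argument goes through.
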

\begin{proof}
Let $C$ be a generating subspace for the algebra $A \# U(\cV)$, containing $\{ x_1, \ldots, x_n\}$. By \cite{BIN23}, $M$ is a free module over $A$ of a finite rank, $M = A \otimes W$ with $\dim W < \infty$. Then $M$ is generated by the finite-dimensional subspace $M_0 = 1 \otimes W$. Consider filtration $M_0 \subset M_1 \subset M_2 \subset \ldots$,
where $M_{r+1} = C M_r$. Clearly, $M_r$ contains all elements of the form 
$x^k \otimes w$ with $|k| \leq r$. The action of $\cV$ on $M$ was explicitly described in \cite{BIN23} (see also Section \ref{section:examplesofholonomicavmod} below). From this description we see that there is a linear in $r$ bound on the degree in $x$ of the elements in $M_r$. This implies that $\lim\limits_{r \to \infty} \log_r \dim M_r = n$.
\end{proof}

\begin{lemma}\label{lemma:tensorholiffishol}
Let $M$ be an $A\cV$-module such that $M \cong P \otimes Q$, where $P$ is a $\cD$-module and $Q$ is an $\cL_+$-module. Then, $M$ is a holonomic $A\cV$-module if and only if $P$ is a holonomic $\cD$-module and $Q$ is a finite-dimensional $\cL_+$-module.
\end{lemma}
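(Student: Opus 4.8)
The plan is to apply Lemma \ref{lemma:gkdimoftensorproduct}, which computes the Gelfand-Kirillov dimension of a tensor product $P \otimes Q$ as the sum $\GKdim_{\cD}(P) + \GKdim_{U(\cL_+)}(Q)$. Since $M \cong P \otimes Q$ is assumed to be an $A\cV$-module and the hypotheses of that lemma require $P$ and $Q$ to be finitely generated, I would first observe that we may reduce to the finitely generated case (if $M$ is holonomic it is by definition finitely generated, and conversely we assume $P$ and $Q$ finitely generated over their respective algebras so that the lemma applies). With the dimension formula in hand, the equivalence becomes a matter of matching the numerical constraints coming from the Bernstein inequalities.

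For the forward direction, suppose $M$ is holonomic, so $\GKdim_{A\cV}(M) = n$. By Lemma \ref{lemma:gkdimoftensorproduct} this gives
\[
\GKdim_{\cD}(P) + \GKdim_{U(\cL_+)}(Q) = n.
\]
The key input is the Bernstein inequality for $\cD$-modules recalled in Example \ref{example:bernsteininequalityfordmodules}, namely $\GKdim_{\cD}(P) \geq n$ whenever $P \neq 0$, together with the trivial bound $\GKdim_{U(\cL_+)}(Q) \geq 0$. (I should first dispose of the degenerate case $P = 0$ or $Q = 0$, which forces $M = 0$ and is vacuously fine.) These two inequalities, combined with the fact that the sum equals exactly $n$, force $\GKdim_{\cD}(P) = n$ and $\GKdim_{U(\cL_+)}(Q) = 0$. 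The first equality says precisely that $P$ is holonomic as a $\cD$-module. For the second, I would invoke Corollary \ref{lemma:gkdimlessthanoneimpliesfdforlplus}: a finitely generated $\cL_+$-module of Gelfand-Kirillov dimension $0$ is finite-dimensional, which gives exactly the claim that $Q$ is finite-dimensional.

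The reverse direction is the same computation run backwards. If $P$ is a holonomic $\cD$-module then $\GKdim_{\cD}(P) = n$ by definition, and if $Q$ is finite-dimensional then $\GKdim_{U(\cL_+)}(Q) = 0$. Substituting into Lemma \ref{lemma:gkdimoftensorproduct} yields $\GKdim_{A\cV}(M) = n + 0 = n$; together with finite generation of $M$ (which follows from $P$ and $Q$ being finitely generated, so $P \otimes Q$ is generated by the tensor product of their generating subspaces) this means $M$ is holonomic.

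I do not expect any serious obstacle here, since the proof is essentially bookkeeping around the additivity formula of Lemma \ref{lemma:gkdimoftensorproduct} and the two boundary inequalities. The one point deserving mild care is the equivalence between ``$Q$ has Gelfand-Kirillov dimension $0$'' and ``$Q$ is finite-dimensional''; this is not a tautology and relies specifically on Corollary \ref{lemma:gkdimlessthanoneimpliesfdforlplus} (itself a consequence of Lemma \ref{lemma:gkdimlessthanoneimpliesfd}), so I would cite that explicitly rather than treat it as obvious. Beyond this, the argument is a clean pinning-down of the two summands against their lower bounds.
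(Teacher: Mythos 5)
Your proposal is correct and follows the same route as the paper's own proof: apply the additivity formula of Lemma \ref{lemma:gkdimoftensorproduct}, pin the summands against the Bernstein inequality for $\cD$-modules, and invoke Corollary \ref{lemma:gkdimlessthanoneimpliesfdforlplus} to pass between $\GKdim_{U(\cL_+)}(Q)=0$ and finite-dimensionality of $Q$. Your added care about finite generation and the degenerate case $M=0$ is reasonable but not a departure from the paper's argument.
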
 
\begin{proof}
Assume that $M$ is a holonomic $A\cV$-module. By Lemma \ref{lemma:gkdimoftensorproduct},
\[
n = \GKdim_{A\cV}(M) = \GKdim_{\cD} (P) + \GKdim_{U(\cL_+)} (Q) 
\]By the Bernstein inequality for $\cD$-modules, $\GKdim_{\cD}(P) \geq n$. Hence, $ \GKdim_{\cD} (P)=n $ and $P$ is a holonomic $\cD$-module. Furthermore, $\GKdim_{U(\cL_+)} (Q) = 0$, thus $Q$ is a finite-dimensional vector space by Corollary \ref{lemma:gkdimlessthanoneimpliesfdforlplus}. 

On the other hand, if $P$ is a holonomic $\cD$-module and $Q$ is a finite-dimensional $\cL_+$-module, then $M$ is a holonomic $A\cV$-module by Lemma \ref{lemma:gkdimoftensorproduct} and Corollary \ref{lemma:gkdimlessthanoneimpliesfdforlplus}.
\end{proof}

\begin{lemma}
Holonomic $A\cV$-modules are cyclic.
\end{lemma}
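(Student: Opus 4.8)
The plan is to reduce the statement to the corresponding fact for the Weyl algebra. There are two steps. First, I would show that a holonomic $A\cV$-module $M$ is again holonomic when regarded as a module over $\cD$ via the embedding $\cD \hookrightarrow \cD \otimes U(\cL_+) \cong A \# U(\cV)$, $d \mapsto d \otimes 1$. Second, I would invoke the classical theorem (due to Stafford; see also \cite{Cou95}) that every holonomic $\cD$-module is cyclic. Once $M = \cD m$ for some $m \in M$, it follows at once that $M = (A \# U(\cV))\,m$, since $\cD$ is a subalgebra of $A \# U(\cV)$; thus $M$ is cyclic as an $A\cV$-module. So the whole content lies in the first step.

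To carry out the first step, I would exhaust $M$ by $\cD$-submodules. Fix a finite-dimensional generating subspace $M_0$ of $M$ over $A\cV$ and recall the filtration $G = B \otimes 1 + 1 \otimes F$ from Section \ref{section:gkdimofavmod}, where $F$ generates $U(\cL_+)$. Set $P_k = \cD\, F^k M_0$. Each $P_k$ is a finitely generated $\cD$-module (generated by the finite-dimensional space $F^k M_0$), and since $B^m F^k M_0 \subseteq G^{m+k} M_0$ we get $\GKdim_{\cD}(P_k) \le \GKdim_{A\cV}(M) = n$; combined with the Bernstein inequality (Example \ref{example:bernsteininequalityfordmodules}) this yields $\GKdim_{\cD}(P_k) = n$, so each $P_k$ is a holonomic $\cD$-module. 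The $P_k$ form an increasing chain whose union is $M$. The key structural observation is that, because $\cD$ and $U(\cL_+)$ commute inside $\cD \otimes U(\cL_+)$, every $\xi \in \cL_+$ acts on $M$ by a $\cD$-linear endomorphism; hence $P_{k+1} = \sum_{\xi \in F}\xi\, P_k$, so $P_{k+1}/P_k$ is a finite sum of $\cD$-module quotients of $P_k$. In particular the $\cD$-module multiplicities $e(P_k)$ form a nondecreasing sequence, and $M$ is finitely generated (hence holonomic) over $\cD$ precisely when this chain stabilizes, i.e.\ when the $e(P_k)$ are bounded.

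The main obstacle is exactly this stabilization, and the crude growth estimate coming from holonomicity does not suffice: from $\GKdim_{A\cV}(M) = n$ one controls $\dim G^m M_0$ only up to subpolynomial factors (a growth such as $m^n \log m$ is a priori compatible with the definition), and one checks that $\limsup_m \dim(G^m M_0)/m^n \ge e(P_k)/n!$ for every $k$, which does not by itself bound the $e(P_k)$. To close the gap I would prove the finer structural statement that $U(\cL_+) M_0$ is finite-dimensional, equivalently that the positive-degree part $\bigoplus_{d \ge 1}\cL_d$ of $\cL_+$ acts locally nilpotently on $M$ while $\cL_0 \cong \gln$ acts locally finitely; this forces the chain $P_k$ to stabilize after finitely many steps and identifies $M$ with the holonomic $\cD$-module $\cD\cdot U(\cL_+) M_0$. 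With this local nilpotence in hand, the reduction is complete and cyclicity follows from the second step. I expect the local nilpotence of $\bigoplus_{d\ge1}\cL_d$ on holonomic modules to be the genuinely hard point, requiring the specific Lie-theoretic structure of $\cL_+$ (Lemma \ref{lemma:generatorsoflplus}) rather than GK-dimension bounds alone.
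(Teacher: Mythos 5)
Your overall strategy---reduce to cyclicity of holonomic $\cD$-modules---is the right instinct, but your reduction goes through a claim you do not prove and which is substantially harder than the lemma itself. You correctly diagnose that the GK-dimension bound alone does not force the chain $P_k = \cD F^k M_0$ to stabilize, and you then defer the whole difficulty to the assertion that $U(\cL_+)M_0$ is finite-dimensional (equivalently, local nilpotence of $\bigoplus_{d\ge 1}\cL_d$ plus local finiteness of $\cL_0$). That assertion is essentially equivalent to the structural results the paper only establishes \emph{after} this lemma (Proposition \ref{proposition:holonomicmoduleshavetensorsub}, Theorems \ref{theorem:simpleholonomicistensorproduct} and \ref{theorem:holonomicavmodhasfinitelength}, and the concluding corollary that holonomic $A\cV$-modules are $\cD$-holonomic); indeed, the proofs of those results use the Noetherian property, which the paper derives from the present lemma. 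So as written your argument has a genuine gap at its load-bearing step, and filling it in the way you propose risks circularity with the rest of the paper.

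The missing idea is that you do not need $M$ itself to be cyclic (or even finitely generated) over $\cD$; you only need the $\cD$-submodule $\cD M_0$ to be cyclic. Since $B^m M_0 \subset G^m M_0$, the finitely generated $\cD$-module $\cD M_0$ satisfies $\GKdim_{\cD}(\cD M_0) \le \GKdim_{A\cV}(M) = n$, hence is holonomic over $\cD$ and therefore cyclic, say $\cD M_0 = \cD v$. Because $\cD$ and $U(\cL_+)$ commute inside $\cD \otimes U(\cL_+)$,
\[
M = (\cD \otimes U(\cL_+))M_0 = U(\cL_+)\,\cD M_0 = U(\cL_+)\,\cD v = (\cD \otimes U(\cL_+))\,v,
\]
so $M$ is cyclic as an $A\cV$-module. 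This one-line bookkeeping with the tensor factorization replaces the entire stabilization analysis in your first step.
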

\begin{proof}
Suppose that $M_0$ is a finite-dimensional generating subspace of a holonomic $A\cV$-module $M$. Since $\GKdim_{\cD} (\cD M_0) \leq \GKdim_{A\cV}(M) =n$,
we have that $\cD M_0$ is a holonomic $\cD$-module. Therefore, there exists $v \in \cD M_0$ such that $\cD M_0 = \cD v$ because every holonomic $\cD$-module is cyclic \cite[Corollary 10.2.6]{Cou95}. We conclude that $M$ is generated by $v$ as a module over $\cD \otimes U(\cL_+)$ because $M = U(\cL_+)\cD M_0 = U(\cL_+)\cD v$.
\end{proof}

\begin{corollary}
Holonomic $A\cV$-modules are Noetherian.
\end{corollary}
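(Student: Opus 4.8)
The plan is to deduce the corollary directly from the two results that immediately precede it: Lemma \ref{lemma:exactgkdim}, which asserts that every submodule of a holonomic module is again holonomic, and the preceding lemma, which asserts that every holonomic $A\cV$-module is cyclic. Since a module is Noetherian precisely when each of its submodules is finitely generated, it suffices to show that every submodule $N \subseteq M$ of a holonomic module $M$ is finitely generated.

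So I would fix an arbitrary submodule $N \subseteq M$. If $N = 0$ there is nothing to prove, so assume $N \neq 0$. By Lemma \ref{lemma:exactgkdim}, $N$ is holonomic, and by the preceding lemma every holonomic $A\cV$-module is cyclic; hence $N = (A \# U(\cV))\, v$ for a single element $v \in N$, and in particular $N$ is finitely generated. As $N$ was arbitrary, every submodule of $M$ is finitely generated, which is exactly the ascending chain condition, and therefore $M$ is Noetherian.

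The one point that deserves care — and the main obstacle — is that the notion of holonomic already incorporates finite generation, so before invoking cyclicity for $N$ one must first know that $N$ genuinely qualifies as holonomic rather than merely exhibiting the right growth rate. This is supplied by the Gelfand-Kirillov computation underlying Lemma \ref{lemma:exactgkdim}: for a nonzero submodule $N$ one has $\GKdim_{A\cV}(N) \leq \GKdim_{A\cV}(M) = n$ by monotonicity of the Gelfand-Kirillov dimension, while the Bernstein inequality of Lemma \ref{lemma:bernsteininequalityforav} forces the reverse bound; if one wishes to avoid presupposing that $N$ is finitely generated, one applies Lemma \ref{lemma:bernsteininequalityforav} to any nonzero cyclic subrepresentation $N' \subseteq N$ to get $n \leq \GKdim_{A\cV}(N') \leq \GKdim_{A\cV}(N)$. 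Thus $\GKdim_{A\cV}(N) = n$, so $N$ is holonomic, and cyclicity then yields finite generation. A more self-contained alternative would be to restrict scalars along the inclusion $\cD \hookrightarrow \cD \otimes U(\cL_+)$ and argue that $M$ is finitely generated over the Weyl algebra $\cD$: since $\cD$ is Noetherian and every $A\cV$-submodule is in particular a $\cD$-submodule, the ascending chain condition for $A\cV$-submodules would follow from that for $\cD$-submodules; I would caution, however, that establishing finite generation over $\cD$ is not automatic for general (non-simple) holonomic modules and is most cleanly obtained only after the finite length statement, so the route through Lemma \ref{lemma:exactgkdim} and cyclicity is the one I would present here.
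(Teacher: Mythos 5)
Your proof is correct and follows exactly the paper's argument: submodules of holonomic modules are holonomic (Lemma \ref{lemma:exactgkdim}), holonomic modules are cyclic (the preceding lemma), hence every submodule is finitely generated and $M$ is Noetherian. Your additional remark about the circularity lurking in the definition of holonomic (which presupposes finite generation) identifies a genuine subtlety that the paper's own two-line proof glosses over, and your fix via a cyclic subrepresentation is the right one.
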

\begin{proof}
Since every submodule of a holonomic $A\cV$-module is holonomic, we have that every submodule of a holonomic $A\cV$-module is cyclic by last lemma. Hence, a holonomic $A\cV$-module is Noetherian because every submodule is finitely generated (see \cite[Section VII.1.1 Proposition 2]{Bou22}).
\end{proof}

\begin{lemma}[Schur's Lemma]\label{lemma:schurslemmafordmodules}
Suppose $\bk$ is uncountable. Let $P$ be a simple $\cD$-module, then $\End_{\cD}(P) \cong \bk$.
\end{lemma}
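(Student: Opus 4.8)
The plan is to invoke the Dixmier--Quillen strengthening of Schur's lemma, valid for algebras of at most countable dimension over an uncountable algebraically closed field. Write $D = \End_{\cD}(P)$. First I would observe that the ordinary Schur's lemma already gives that $D$ is a division ring containing $\bk$ in its center: any nonzero $\cD$-endomorphism of the simple module $P$ has zero kernel and full image by simplicity, hence is invertible.

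Next I would control the dimensions. The Weyl algebra $\cD$ has the countable basis $\{x^r \partial^s\}$, so $\dim_\bk \cD = \aleph_0$. Since $P$ is simple it is cyclic, so for any nonzero $v \in P$ the orbit map $\cD \to P$, $a \mapsto a v$, is surjective; hence $\dim_\bk P \le \dim_\bk \cD \le \aleph_0$. Fixing such a $v$, the evaluation map $D \to P$, $\phi \mapsto \phi(v)$, is injective, because an endomorphism vanishing on the generator $v$ vanishes on all of $P$. Consequently $\dim_\bk D \le \dim_\bk P \le \aleph_0$.

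The heart of the argument is to show that every $d \in D$ is algebraic over $\bk$. I would argue by contradiction: if $d$ is transcendental, then $\bk[d]$ is a polynomial ring sitting inside the division ring $D$, so the inverses $(d - \lambda)^{-1}$ exist in $D$ for all $\lambda \in \bk$. Clearing denominators in a hypothetical finite linear dependence $\sum_i c_i (d-\lambda_i)^{-1}=0$ with distinct $\lambda_i$ yields a polynomial identity $\sum_i c_i \prod_{j \neq i}(d-\lambda_j)=0$ in $\bk[d]$; substituting $d = \lambda_i$ formally isolates the $i$-th term and forces $c_i=0$. Thus $\{(d-\lambda)^{-1} : \lambda \in \bk\}$ is $\bk$-linearly independent, and since $\bk$ is uncountable this contradicts $\dim_\bk D \le \aleph_0$. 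Hence every $d$ is algebraic over $\bk$, and since $\bk$ is algebraically closed its minimal polynomial is linear, forcing $d \in \bk$. Therefore $D = \bk$.

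The main obstacle is the final step: the linear independence of the family $\{(d-\lambda)^{-1}\}$ over an uncountable field, which is the Amitsur--Dixmier trick and is precisely where both the uncountability hypothesis and the countable-dimension bound inherited from the Weyl algebra are used. The earlier reductions are routine once cyclicity of simple modules and the usual Schur's lemma are in hand.
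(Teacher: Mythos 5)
Your proposal is correct and follows essentially the same route as the paper: the authors likewise reduce to the countable dimension of $P$ (hence of $\End_{\cD}(P)$) over the uncountable field $\bk$ and then cite the standard strengthening of Schur's lemma from Bourbaki, whereas you simply write out the Amitsur--Dixmier independence argument in full. No gaps.
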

\begin{proof}
If $P$ is a simple $\cD$-module, then $P$ has countable dimension as a $\bk$-vector space. Assuming that $\bk$ is uncountable, it follows from standard applications of the Schur's Lemma that $\End_{\cD}(P) \cong \bk$ (see \cite[Section VIII.3.2 Theorem 1]{Bou22}). 
\end{proof}
From now on, we assume that $\bk$ is uncountable.

\begin{lemma}[Jacobson Density Theorem]
Let $P$ be a simple $\cD$-module, $\{p_1,\dots,p_k \} \subset P$ a linearly independent set and $w_1,\dots,w_k \in P$. Then, there exists $f \in \cD$ such that $f p_i = w_i$ for all $i = 1, \ldots, k$.
\end{lemma}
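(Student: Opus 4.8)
The statement to prove is the Jacobson Density Theorem for a simple $\cD$-module $P$: given a linearly independent set $\{p_1,\dots,p_k\} \subset P$ and arbitrary targets $w_1,\dots,w_k \in P$, there exists $f \in \cD$ with $f p_i = w_i$ for all $i$.

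The plan is to deduce this from the abstract Jacobson Density Theorem, whose hypotheses are verified using the two preceding lemmas. The standard statement of the density theorem says that if $P$ is a simple module over a ring $\cD$, then $\cD$ acts densely on $P$ viewed as a vector space over the division ring $\End_{\cD}(P)$; concretely, for any $\End_{\cD}(P)$-linearly independent set $p_1,\dots,p_k$ and any targets $w_1,\dots,w_k$, some element of $\cD$ sends each $p_i$ to $w_i$. By Lemma \ref{lemma:schurslemmafordmodules} (Schur's Lemma, using that $\bk$ is uncountable and $P$ has countable dimension), we have $\End_{\cD}(P) \cong \bk$. Therefore $\End_{\cD}(P)$-linear independence is exactly $\bk$-linear independence, so the hypothesis ``$\{p_1,\dots,p_k\}$ linearly independent'' in our statement matches the hypothesis required by the abstract theorem.

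Concretely, I would proceed in three short steps. First, invoke Lemma \ref{lemma:schurslemmafordmodules} to identify the endomorphism division ring $\End_{\cD}(P)$ with the base field $\bk$. Second, observe that since $P$ is a simple $\cD$-module, it is in particular a simple module over the associative algebra $\cD$, so the hypotheses of the classical Jacobson Density Theorem (as in \cite[Section VIII.3.2]{Bou22} or any standard reference) are met. Third, apply that theorem: because $\bk$-linear independence coincides with $\End_{\cD}(P)$-linear independence after the identification, the set $\{p_1,\dots,p_k\}$ is independent over the endomorphism ring, and density yields the desired $f \in \cD$ with $f p_i = w_i$ for every $i$.

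The argument is essentially a dictionary translation, so there is no serious obstacle; the only point requiring care is the identification of independence notions. One must make sure that the simplicity hypothesis on $P$ together with the uncountability of $\bk$ genuinely forces $\End_{\cD}(P) = \bk$, which is precisely the content of the previous lemma and relies on $P$ having countable dimension over $\bk$ (true because $\cD$ is countable-dimensional and $P$ is cyclic). Once that identification is in place, the statement is an immediate specialization of the abstract density theorem, and no computation with the generators $x_i, \partial_{x_i}$ of $\cD$ is needed.
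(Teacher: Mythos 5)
Your proposal is correct and matches the paper's proof exactly: the paper likewise deduces the statement from the classical Jacobson Density Theorem combined with the preceding Schur's Lemma identifying $\End_{\cD}(P)$ with $\bk$. You simply spell out the dictionary between $\bk$-linear independence and $\End_{\cD}(P)$-linear independence, which the paper leaves implicit.
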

\begin{proof}
This is an application of the Jacobson Density Theorem (see \cite[Theorem 13.14]{Isa09}) using the Schur's Lemma \ref{lemma:schurslemmafordmodules}.
\end{proof}

\begin{proposition}\label{proposition:holonomicmoduleshavetensorsub}
Let $M$ be a holonomic $A\cV$-module, then there exists an $A\cV$-submodule $N \subset M$ such that $N \cong P \otimes Q$, where $P$ is a simple holonomic $\cD$-module and $Q$ is a finite-dimensional $\cL_+$-module.
\end{proposition}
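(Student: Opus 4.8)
The plan is to locate a simple $\cD$-submodule inside $M$ and then close it up under the commuting $U(\cL_+)$-action to build the desired tensor submodule.

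First I would fix a finite-dimensional generating subspace $M_0$ of $M$ and pass to the $\cD$-module $\cD M_0$. As in the proof that holonomic modules are cyclic, the inclusion $B^mM_0 \subseteq G^m M_0$ gives $\GKdim_\cD(\cD M_0) \leq \GKdim_{A\cV}(M) = n$, while the Bernstein inequality (Example \ref{example:bernsteininequalityfordmodules}) gives the reverse inequality, so $\cD M_0$ is a holonomic $\cD$-module. Holonomic $\cD$-modules have finite length \cite{Cou95}, so $\cD M_0$ contains a simple $\cD$-submodule $P$; being a submodule of a holonomic module, $P$ is itself a simple holonomic $\cD$-module.

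Next I exploit that under the identification $A\# U(\cV) \cong \cD \otimes U(\cL_+)$ of Theorem \ref{theorem:avisisotodtensorlplus} the actions of $\cD$ and $U(\cL_+)$ on $M$ commute. Set $N = U(\cL_+) P$. Because $U(\cL_+)$ commutes with $\cD$, we have $\cD N = U(\cL_+)\cD P = N$, so $N$ is a $\cD$-submodule; it is visibly $U(\cL_+)$-invariant, hence an $A\cV$-submodule, and it is holonomic by Lemma \ref{lemma:exactgkdim}. Viewing $N$ as a $\cD$-module, $N = \sum_{u} uP$, where each $uP$ is the image of $P$ under a $\cD$-endomorphism, hence $0$ or isomorphic to $P$ by simplicity. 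Thus $N$ is a sum of simple $\cD$-submodules all isomorphic to $P$, i.e. a $P$-isotypic module. Using Schur's Lemma \ref{lemma:schurslemmafordmodules} (here the uncountability of $\bk$ enters), $\End_\cD(P) \cong \bk$, and the standard isotypic decomposition gives a $\cD$-isomorphism $\Phi : P \otimes Q \to N$, $p \otimes \varphi \mapsto \varphi(p)$, where $Q = \Hom_\cD(P,N)$.

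The commuting $U(\cL_+)$-action then equips $Q$ with an $\cL_+$-module structure by post-composition, $(u\cdot \varphi)(p) = u(\varphi(p))$, and one checks that $\Phi$ becomes an isomorphism of $A\cV$-modules (the $\cD$-action lives on the $P$-factor and the $\cL_+$-action on the $Q$-factor, matching the module structure of the Example). It remains to see that $Q$ is finite-dimensional. Since $N$ is holonomic it is finitely generated (the Noetherian property), and writing a finite generating set in terms of $P \otimes Q$ produces a finite-dimensional subspace $Q_0 \subseteq Q$ with $N \subseteq P \otimes U(\cL_+)Q_0$; as $P \neq 0$ this forces $Q = U(\cL_+)Q_0$, so $Q$ is a finitely generated $\cL_+$-module. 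Now Lemma \ref{lemma:gkdimoftensorproduct} yields $n = \GKdim_{A\cV}(N) = \GKdim_\cD(P) + \GKdim_{U(\cL_+)}(Q) = n + \GKdim_{U(\cL_+)}(Q)$, whence $\GKdim_{U(\cL_+)}(Q) = 0$ and $Q$ is finite-dimensional by Corollary \ref{lemma:gkdimlessthanoneimpliesfdforlplus}. Taking this $N$ proves the proposition.

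The main obstacle I anticipate is the step that reassembles the isotypic $\cD$-decomposition into an honest $A\cV$-tensor submodule: one must verify that the $U(\cL_+)$-action descends to a well-defined $\cL_+$-module structure on the multiplicity space $Q = \Hom_\cD(P,N)$ and that the resulting isomorphism $P \otimes Q \cong N$ intertwines both actions simultaneously. Establishing finite generation of $Q$ is the other delicate point, since the Gelfand-Kirillov additivity of Lemma \ref{lemma:gkdimoftensorproduct} is only available when both tensor factors are finitely generated.
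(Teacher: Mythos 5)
Your argument is correct, and while it shares the paper's skeleton --- locate a simple holonomic $\cD$-submodule $P \subseteq \cD M_0$ and pass to $N = U(\cL_+)P$ --- the way you identify $N$ with a tensor product is genuinely different. The paper considers the $A\cV$-module map $\Psi : P \otimes U(\cL_+) \to M$, $p \otimes u \mapsto up$, and uses the Jacobson Density Theorem to show that $\ker \Psi$ has the form $P \otimes I$ for a left ideal $I \subseteq U(\cL_+)$, so that the image is $P \otimes Q$ with $Q = U(\cL_+)/I$; you instead observe that each $uP$ is a homomorphic image of the simple module $P$, so $N$ is $P$-isotypic semisimple over $\cD$, and you invoke the canonical decomposition $N \cong P \otimes \Hom_{\cD}(P,N)$ together with Schur's Lemma, with $\cL_+$ acting on the multiplicity space by post-composition. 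Both routes ultimately rest on $\End_{\cD}(P) \cong \bk$ (hence on the uncountability of $\bk$), but they trade convenience in different places: the paper's $Q = U(\cL_+)/I$ is cyclic, so the finite-generation hypothesis of Lemma \ref{lemma:gkdimoftensorproduct} is automatic, whereas your $Q = \Hom_{\cD}(P,N)$ requires the extra step --- which you correctly supply via the Noetherianity of $M$ --- of extracting a finite-dimensional $Q_0$ with $Q = U(\cL_+)Q_0$. In compensation, your construction is more canonical and makes the $\cL_+$-module structure on the multiplicity space explicit, rather than encoding it in a left ideal. One cosmetic slip: $p \mapsto up$ is a $\cD$-module homomorphism $P \to N$, not an endomorphism of $P$; the conclusion that $uP$ is $0$ or isomorphic to $P$ is unaffected.
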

\begin{proof}
Suppose that $M_0$ is a finite-dimensional generating subspace of $M$. Then, $\cD M_0$ is a holonomic $\cD$-module. Therefore, $\cD M_0$ is an Artinian $\cD$-module by \cite[Theorem 10.2.2]{Cou95}, thus contains a simple $\cD$-module $P$, which is also a holonomic $\cD$-module. The map
\begin{align*}
\Psi: P \otimes U(\cL_+) & \rightarrow M \\
p \otimes u & \mapsto up
\end{align*}
is a homomorphism of $A\cV$-modules, where we view $U(\cL_+)$ as a left $U(\cL_+)$-module. Let $w \in \mathrm{ker}\, \Psi$. Write
\[
w =  \sum_{i=1}^k p_i \otimes u_i
\]
with $\{p_1,\dots,p_k\}$ linearly independent and $u_1,\dots,u_k \in U(\cL_+)$ non-zero. By Jacobson density theorem, there exists $f \in D$ such that $f p_i = \delta_{1i} p_1$. Hence, $fw = p_1 \otimes u_1$. We conclude that $P \otimes u_1 \subset Dw$. Likewise, $P \otimes u_i \subset Dw$ for all $i = 1, \ldots, k$. Therefore $(\cD \otimes U(\cL_+))w = P \otimes I_w$, where $I_w$ is a left ideal of $U(\cL_+)$. Define $\displaystyle I = \sum_{w \in \mathrm{ker} \Psi} I_w$. Hence, the image of $\Psi$ is an $A\cV$-submodule of $M$ isomorphic to 
\[
(P \otimes U(\cL_+))/ (P \otimes I) \cong P \otimes Q.
\]
where $Q =  U(\cL_+)/I$. By Lemma \ref{lemma:gkdimoftensorproduct}, $\GKdim_{U(\cL_+)}(Q)=0$ because $\GKdim_{A\cV}(M) = \GKdim_{\cD}(P) =n$. Hence, $Q$ is a finite-dimensional generalized weight $\cL_+$-module by Corollary \ref{lemma:gkdimlessthanoneimpliesfdforlplus}.

\end{proof}

\begin{theorem}\label{theorem:simpleholonomicistensorproduct}
Let $M$ be a simple holonomic $A\cV$-module, then $M \cong P \otimes Q$, where $P$ is a simple holonomic $\cD$-module and $Q$ is a simple finite-dimensional $\gl_n$-module.
\end{theorem}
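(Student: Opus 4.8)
The plan is to build on Proposition \ref{proposition:holonomicmoduleshavetensorsub}, which already gives an $A\cV$-submodule $N \cong P \otimes Q$ inside $M$ with $P$ a simple holonomic $\cD$-module and $Q$ a finite-dimensional $\cL_+$-module. Since $M$ is simple and $N$ is a nonzero submodule, I would first argue $M = N \cong P \otimes Q$. The task then reduces to identifying the $\cL_+$-module structure on $Q$: I must show that $Q$ is not merely finite-dimensional but is in fact a \emph{simple} $\cL_+$-module on which the action factors through the quotient $\cL_+ / \cL_{++} \cong \cL_0 \cong \gln$, so that $Q$ becomes a simple finite-dimensional $\gln$-module.

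The key steps, in order, are as follows. First, establish $M = P \otimes Q$ using simplicity. Second, show that the positive part $\cL_{++} = \bigoplus_{k \geq 1}\cL_k$ (the derivations vanishing to order $\geq 2$ at the origin) acts as zero on $Q$. The natural mechanism is that $\cL_{++}$ acts locally nilpotently on any finite-dimensional graded $\cL_+$-module because of the grading: each $\cL_d$ with $d \geq 1$ raises degree, and on a finite-dimensional space a degree-raising operator is nilpotent, so the space $Q^{\cL_{++}}$ of $\cL_{++}$-invariants is nonzero and is itself an $\cL_0$-submodule. I would then use simplicity of $M = P \otimes Q$ to promote this: the subspace $P \otimes Q^{\cL_{++}}$ should be shown to be an $A\cV$-submodule (one checks $\cD$ preserves it trivially, and $\cL_+$ preserves it because $\cL_0$ fixes $Q^{\cL_{++}}$ and $\cL_{++}$ kills it), forcing $Q = Q^{\cL_{++}}$, i.e.\ $\cL_{++}$ acts as zero on all of $Q$. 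Third, once $\cL_{++} Q = 0$, the $\cL_+$-action on $Q$ factors through $\cL_+/\cL_{++} \cong \cL_0 \cong \gln$, making $Q$ a finite-dimensional $\gln$-module. Fourth, show $Q$ is simple as a $\gln$-module: any proper nonzero $\gln$-submodule $Q' \subset Q$ would give a proper nonzero $A\cV$-submodule $P \otimes Q' \subset M$, contradicting simplicity of $M$.

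The main obstacle I anticipate is the second step, showing that $\cL_{++}$ acts trivially rather than just nilpotently. Finite-dimensionality gives local nilpotence of each positive-degree component, but I must leverage the simplicity of the full tensor module $M$ to pass from "has a nonzero invariant subspace" to "the whole action is trivial." The cleanest route is to verify that $P \otimes Q^{\cL_{++}}$ is genuinely an $A\cV$-submodule: this requires checking that the mixed term in the $\cV$-action (the sum $\sum_{0<m\leq k}\binom{k}{m}(x^{k-m}p)\otimes X^m\tfrac{\partial}{\partial X_i}q$ from the tensor-product formula in the Example) stays inside $P \otimes Q^{\cL_{++}}$ when $q \in Q^{\cL_{++}}$. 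Since $X^m \tfrac{\partial}{\partial X_i} \in \cL_{|m|-1}$ and we need the result to again be $\cL_{++}$-invariant, I would argue using that $\cL_{++}$ is an ideal complement behavior: $[\cL_0, \cL_{++}] \subseteq \cL_{++}$ and $[\cL_{++},\cL_{++}] \subseteq \cL_{++}$, so $\cL_{++} \cdot (\cL_0 q) \subseteq \cL_0 \cdot (\cL_{++} q) + \cL_{++} q = 0$ for $q \in Q^{\cL_{++}}$, confirming $\cL_0$ preserves $Q^{\cL_{++}}$ while $\cL_{++}$ sends it to $0$; hence $P \otimes Q^{\cL_{++}}$ is $\cV$-stable. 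Granting this, simplicity forces $Q^{\cL_{++}} = Q$ and the remaining steps are routine.
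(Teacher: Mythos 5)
Your proposal follows essentially the same route as the paper: invoke Proposition \ref{proposition:holonomicmoduleshavetensorsub}, use simplicity of $M$ to conclude $M \cong P \otimes Q$, and then use simplicity again to see that a proper nonzero $\cL_+$-submodule $Q' \subset Q$ would yield a proper nonzero $A\cV$-submodule $P \otimes Q'$. The paper is terser at the final stage: it concludes that $Q$ is a simple $\cL_+$-module and then simply cites the standard fact that a simple finite-dimensional $\cL_+$-module is a simple $\gl_n$-module, whereas you unwind that fact by showing that the positive part $\bigoplus_{k \geq 1} \cL_k$ annihilates $Q$. Your verification that $Q^{\cL_{++}}$ is stable under $\cL_0$ and killed by the positive part, so that $P \otimes Q^{\cL_{++}}$ is an $A\cV$-submodule, is correct, since the positive part is an ideal of $\cL_+$.

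The one step that needs repair is your justification that $Q^{\cL_{++}} \neq 0$. You argue that each $\cL_d$ with $d \geq 1$ raises degree on a finite-dimensional \emph{graded} module, but the module $Q$ produced by Proposition \ref{proposition:holonomicmoduleshavetensorsub} is a quotient $U(\cL_+)/I$ by a left ideal that need not be graded, so ``raises degree'' has no meaning for it as written. The standard fix is to use the Euler element $E = \sum_i X_i \xpartial{X_i} \in \cL_0$: decompose $Q$ into generalized eigenspaces of $E$; since $[E,y] = d\,y$ for $y \in \cL_d$, every homogeneous element of positive degree shifts generalized eigenvalues by a positive integer, and as only finitely many eigenvalues occur, every element of $\bigoplus_{k\geq 1}\cL_k$ acts nilpotently on $Q$. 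Engel's theorem applied to its image in $\gl(Q)$ then gives $Q^{\cL_{++}} \neq 0$. (Alternatively, you can first run your submodule argument to show that $Q$ is simple already as an $\cL_+$-module, as the paper does; then $Q^{\cL_{++}}$, being a nonzero $\cL_+$-submodule, is automatically all of $Q$, and you do not need to route the argument back through the $A\cV$-structure on $P \otimes Q$.) With this substitution your proof is complete and matches the paper's.
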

\begin{proof}
By Proposition \ref{proposition:holonomicmoduleshavetensorsub}, there exists an isomorphism $\psi : P \otimes Q \rightarrow M$, where $P$ is a simple holonomic $\cD$-module and $Q$ is a finite-dimensional $\cL_+$-module. If $Q' \subset Q$ is an $\cL_+$-submodule of $Q$, then $\psi(P \otimes Q')$ is a proper $A\cV$-submodule of $M$. Therefore, $Q$ must be a simple $\gl_n$-module because every simple finite-dimensional $\cL_+$-module is a simple $\gl_n$-module.
\end{proof}

\begin{lemma}\label{lemma:tensorhasfinitelength}
Let $M$ be a holonomic $A\cV$-module that is isomorphic to the tensor product $P \otimes Q$, where $P$ is a $\cD$-module and $Q$ is an $\cL_+$-module. Then, $M$ has finite length.
\end{lemma}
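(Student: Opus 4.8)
The plan is to reduce the finite-length statement for $M = P \otimes Q$ to the known finite-length property of holonomic $\cD$-modules together with the finite-dimensionality of $Q$. First I would invoke Lemma \ref{lemma:tensorholiffishol}: since $M$ is holonomic, $P$ is a holonomic $\cD$-module and $Q$ is a finite-dimensional $\cL_+$-module. A holonomic $\cD$-module is Artinian (see \cite[Theorem 10.2.2]{Cou95}) as well as Noetherian, hence has finite length as a $\cD$-module; so I fix a composition series $0 = P_0 \subset P_1 \subset \cdots \subset P_r = P$ whose factors $P_i/P_{i-1}$ are simple holonomic $\cD$-modules. Since $Q$ is finite-dimensional it automatically has finite length over $\cL_+$, so I fix a composition series $0 = Q_0 \subset Q_1 \subset \cdots \subset Q_s = Q$ whose factors $Q_j/Q_{j-1}$ are simple finite-dimensional $\cL_+$-modules, that is, simple $\gl_n$-modules.

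Next I would assemble these into a filtration of $M$. Ordering the pairs $(i,j)$ lexicographically, the submodules $P_{i-1}\otimes Q + P_i \otimes Q_j$ interpolate between $P_{i-1}\otimes Q$ and $P_i \otimes Q$, and a routine computation of intersections (using exactness of $- \otimes_\bk -$ and $P_{i-1}\otimes Q \cap P_i \otimes Q_j = P_{i-1}\otimes Q_j$) shows that the successive quotients are isomorphic, as $A\cV$-modules, to the external tensor products $(P_i/P_{i-1}) \otimes (Q_j/Q_{j-1})$. There are $rs$ such factors, so it suffices to show that each of them is a simple $A\cV$-module.

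The key step is therefore to prove that if $P'$ is a simple holonomic $\cD$-module and $Q'$ is a simple finite-dimensional $\gl_n$-module, then $P' \otimes Q'$ is a simple $A\cV$-module; this is the converse of Theorem \ref{theorem:simpleholonomicistensorproduct} and can be obtained by the density argument already used in Proposition \ref{proposition:holonomicmoduleshavetensorsub}. Given a nonzero submodule $N \subseteq P' \otimes Q'$, I would choose a nonzero $w = \sum_{i=1}^k p_i \otimes q_i \in N$ with $\{p_1, \ldots, p_k\}$ linearly independent and each $q_i \ne 0$. Because $\End_{\cD}(P') \cong \bk$ by Lemma \ref{lemma:schurslemmafordmodules}, the Jacobson density theorem yields $f \in \cD$ with $f p_i = \delta_{1i} p_1$, so $f w = p_1 \otimes q_1 \in N$. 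Since $P'$ is simple over $\cD$ and $Q'$ is simple over $U(\cL_+)$, we have $\cD p_1 = P'$ and $U(\cL_+) q_1 = Q'$, whence $(\cD \otimes U(\cL_+))(p_1 \otimes q_1) = (\cD p_1)\otimes(U(\cL_+)q_1) = P' \otimes Q' \subseteq N$, forcing $N = P' \otimes Q'$.

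The main obstacle I anticipate is precisely this simplicity claim for the factors: one must check that the density step produces the element $p_1 \otimes q_1$ exactly and that $(\cD p_1) \otimes (U(\cL_+) q_1)$ really exhausts $P' \otimes Q'$, which relies on both the uncountability hypothesis (through Lemma \ref{lemma:schurslemmafordmodules}) and the simplicity of each factor on the $\gl_n$ side. Once the $rs$ subquotients are known to be simple, the filtration constructed above is a finite composition series for $M$, and therefore $M$ has finite length.
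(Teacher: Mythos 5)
Your proof is correct and follows essentially the same route as the paper: interleave a composition series of the holonomic $\cD$-module $P$ with one of the finite-dimensional $\cL_+$-module $Q$ to obtain a finite chain of $A\cV$-submodules of $P \otimes Q$ whose subquotients are $(P_i/P_{i-1}) \otimes (Q_j/Q_{j-1})$. The only substantive difference is that you explicitly verify, via Schur's lemma and the Jacobson density theorem, that these subquotients are simple $A\cV$-modules --- a point the paper's proof leaves implicit when it simply declares the resulting chain to be a composition series.
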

\begin{proof}
If $M \cong P \otimes Q$, then $P$ is a holonomic $\cD$-module and $Q$ is a finite-dimensional $\cL_+$-module by Lemma \ref{lemma:tensorholiffishol}. Since $P$ is a holonomic $\cD$-module, $P$ has finite length by \cite[Section 10.2]{Cou95}. Let $P_1 \subset P_2 \subset \dots \subset P_r = P$ a composition series of $P$. On the other hand, because $Q$ is a finite-dimensional $\cL_+$-module, it admits a composition series  $Q_1 \subset Q_2 \subset \dots \subset Q_s =Q$ as well. If $N_{a,1} = P_a \otimes Q_1 $ and $N_{a,b} = P_a \otimes Q_b + P_r \otimes Q_{b-1}$ for each $a=1,\dots,r$ and $b= 2,\dots,s$, then 
\[
N_{1,1} \subset N_{2,1} \subset \dots \subset N_{r,1} \subset N_{1,2} \subset N_{2,2} \subset \dots \subset N_{r,2} \subset N_{1,3} \subset \dots \subset N_{r-1,s} \subset N_{r,s} = M
\]
is a composition series for $P \otimes Q$.
\end{proof}

\begin{theorem}\label{theorem:holonomicavmodhasfinitelength}
Let $M$ be a holonomic $A\cV$-module. Then, $M$ has finite length.
\end{theorem}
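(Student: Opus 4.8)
The plan is to peel off finite-length tensor-product submodules one layer at a time and to use Noetherianity to guarantee that this process terminates after finitely many steps. Concretely, I would construct an ascending chain of submodules $0 = N_0 \subsetneq N_1 \subsetneq N_2 \subsetneq \cdots$ of $M$ in which every successive quotient $N_{i+1}/N_i$ is a tensor product of the form $P \otimes Q$ with $P$ a simple holonomic $\cD$-module and $Q$ a finite-dimensional $\cL_+$-module.

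First I would set $N_0 = 0$. Assuming $N_i \subsetneq M$ has been constructed, the quotient $M/N_i$ is again a nonzero holonomic $A\cV$-module by Lemma \ref{lemma:exactgkdim}, so Proposition \ref{proposition:holonomicmoduleshavetensorsub} supplies a nonzero submodule of $M/N_i$ isomorphic to such a $P \otimes Q$. Pulling this submodule back along the canonical projection $M \to M/N_i$ yields a submodule $N_{i+1}$ with $N_i \subsetneq N_{i+1} \subseteq M$ and $N_{i+1}/N_i \cong P \otimes Q$. By Lemma \ref{lemma:tensorhasfinitelength}, each such quotient has finite length. It is essential here that holonomicity passes to the quotients $M/N_i$, which is exactly what Lemma \ref{lemma:exactgkdim} provides.

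Next I would invoke the fact that holonomic $A\cV$-modules are Noetherian (the corollary following the cyclicity lemma). Since the chain $N_0 \subsetneq N_1 \subsetneq \cdots$ is strictly increasing, the ascending chain condition forces it to stabilize, so there is some $k$ with $N_k = M$. This produces a finite filtration $0 = N_0 \subset N_1 \subset \cdots \subset N_k = M$ all of whose successive quotients have finite length.

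Finally, since length is additive along short exact sequences, a module equipped with a finite filtration whose successive quotients have finite length itself has finite length, bounded by the sum of the lengths of the $N_{i+1}/N_i$; hence $M$ has finite length. The only real obstacle is ensuring that the extraction of tensor-product layers halts, and this is resolved precisely by the Noetherian property established earlier; everything else is a routine dévissage.
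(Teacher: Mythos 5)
Your proposal is correct and follows essentially the same dévissage as the paper: repeatedly extract tensor-product submodules via Proposition \ref{proposition:holonomicmoduleshavetensorsub} (using Lemma \ref{lemma:exactgkdim} to keep the quotients holonomic), terminate the chain by Noetherianity, and apply Lemma \ref{lemma:tensorhasfinitelength} to each layer. No gaps.
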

\begin{proof}
By Proposition \ref{proposition:holonomicmoduleshavetensorsub},
$M$ has a non-zero submodule $N_1$ of the form $N_1 = P_1 \otimes Q_1$,
where $P_1$ is a $\cD$-module and $Q_1$ is an $\cL_+$-module. The quotient $M/N_1$ is holonomic and has a non-zero submodule $\overline{N}_2$, which is a tensor product $\overline{N}_2 = P_2 \otimes Q_2$. Let $N_2$ be the preimage of $\overline{N}_2$ under
$M \rightarrow M/N_1$. Continuing this inductive procedure, we will get a strictly increasing chain of submodules
\[
N_1 \subset N_2 \subset N_3 \subset \ldots
\]
with holonomic quotients $N_i / N_{i-1} \cong P_i \otimes Q_i$, where 
$P_i$ are holonomic $\cD$-modules and $Q_i$ are finite-dimensional $\cL_+$-modules. Since $M$ is Noetherian, this sequence terminates, $N_k = M$.

By Lemma \ref{lemma:tensorhasfinitelength}, each $N_{i}/N_{i-1}$, $i=1,\dots,k$ has finite length. Therefore, $M$ has a finite length.
\end{proof}

Suppose that $M$ is an $A\cV$-module. By Theorem \ref{theorem:avisisotodtensorlplus}, $M$ is a module for $A\# U(\cV)$, which is isomorphic to $\cD \otimes U(\cL)$. In particular, $M$ is a module over $\cD \cong \cD \otimes \bk \subset \cD \otimes U(\cL)$. Combining Lemma \ref{lemma:tensorholiffishol}, Theorem \ref{theorem:simpleholonomicistensorproduct} and Theorem \ref{theorem:holonomicavmodhasfinitelength}, we get that every holonomic $A\cV$-module is  holonomic as a $\cD$-module as well, because Lemma \ref{lemma:exactgkdim} and Theorem \ref{theorem:holonomicavmodhasfinitelength} also hold for holonomic $\cD$-modules \cite[Proposition 10.1.1, Theorem 10.2.2]{Cou95}.
\begin{corollary}
Let $M$ be a holonomic $A\cV$-module. Then, $M$ is holonomic as a $\cD$-module.
\end{corollary}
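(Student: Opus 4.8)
The plan is to reduce to the simple case using the finite length of $M$, and then to exploit the tensor decomposition of simple holonomic $A\cV$-modules together with the stability of $\cD$-module holonomicity under finite extensions. First I would record that $M$ acquires its $\cD$-module structure through the embedding $\cD \cong \cD \otimes \bk \subset \cD \otimes U(\cL_+) \cong A \# U(\cV)$, so that the $\cD$-action ignores the $U(\cL_+)$-factor. By Theorem \ref{theorem:holonomicavmodhasfinitelength}, $M$ has finite length as an $A\cV$-module, hence admits a composition series
\[
0 = M_0 \subset M_1 \subset \cdots \subset M_\ell = M
\]
whose factors $S_j = M_j / M_{j-1}$ are simple $A\cV$-modules. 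Each $S_j$ is a subquotient of the holonomic module $M$, hence holonomic by Lemma \ref{lemma:exactgkdim}, and therefore is a simple holonomic $A\cV$-module.

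Next I would apply Theorem \ref{theorem:simpleholonomicistensorproduct} to each factor, obtaining $S_j \cong P_j \otimes Q_j$ with $P_j$ a simple holonomic $\cD$-module and $Q_j$ a finite-dimensional $\gl_n$-module. Since $\cD$ acts only on the first tensor factor, as a $\cD$-module $S_j$ is isomorphic to the direct sum $P_j^{\oplus \dim Q_j}$ of finitely many copies of the holonomic $\cD$-module $P_j$. Because finite direct sums of holonomic $\cD$-modules are again holonomic (the $\cD$-module analogue of Lemma \ref{lemma:exactgkdim}, see \cite[Proposition 10.1.1]{Cou95}), each $S_j$ is holonomic as a $\cD$-module.

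Finally I would induct along the composition series using the short exact sequences $0 \to M_{j-1} \to M_j \to S_j \to 0$ of $\cD$-modules: since the middle term of such a sequence is holonomic whenever both ends are (again \cite[Proposition 10.1.1]{Cou95}), and $S_j$ is holonomic by the previous step, it follows inductively that every $M_j$ is a holonomic $\cD$-module, and the case $j = \ell$ yields the claim for $M$. The only points requiring care are the identification of the $\cD$-module structure on $P_j \otimes Q_j$ with the direct sum $P_j^{\oplus \dim Q_j}$ and the invocation of extension-stability for $\cD$-modules; neither presents a genuine obstacle, as all the substantive content is already carried by Theorems \ref{theorem:simpleholonomicistensorproduct} and \ref{theorem:holonomicavmodhasfinitelength}.
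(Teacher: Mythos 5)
Your proof is correct and follows essentially the same route as the paper, which likewise combines Theorem \ref{theorem:holonomicavmodhasfinitelength}, Theorem \ref{theorem:simpleholonomicistensorproduct}, and the closure of holonomic $\cD$-modules under finite direct sums and extensions (\cite[Proposition 10.1.1, Theorem 10.2.2]{Cou95}); you have merely written out explicitly the composition-series induction and the identification of $P_j \otimes Q_j$ with $P_j^{\oplus \dim Q_j}$ as a $\cD$-module that the paper leaves implicit.
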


\section{Holonomic modules are differentiable}\label{section:differentiablemodules}
Let $R$ be a commutative algebra over $\bk$ and let $M,N$ be $R$-modules.
For $f \in R$ denote by $\delta(f)$ the adjoint operator on $\Hom_{\bk}(M,N)$,
$\delta(f) D = D\circ f - f \circ D$.
Following \cite{Gro67}, we set $\Diff_{-1}(M,N) = (0)$ and define inductively
\[
\Diff_{s+1}(M,N) = \left \{D \in \Hom_{\bk}(M,N) \mid \delta(f) D \in \Diff_{s} (M,N) \ \ \forall f \in R \right \}.
\]
for each $s \geq -1$. 
Note that $\Diff_{0}(M,N) = \Hom_R (M, N)$.
An element of $\Diff_s(M,N)$ is a called a \emph{differential operator} of order less or equal to $s$ and 
we define the space of differential operators as
\[
\Diff ( M,N) = \bigcup_{s \geq 0} \Diff_k(M,N).
\] 
Then differential operators of order less or equal to $s$ are determined by the condition
\[
\delta(f_1) \circ \cdots \circ \delta(f_s) (D) = 0 \ \ \ \ \text{for all} \ f_1, \ldots, f_s \in R.
\]
Since the field $\bk$ is infinite, a linearization argument shows that this is equivalent to 
\begin{equation}
\label{sDiff}
\delta(f)^s D = 0 \ \ \ \ \text{for all} \  f \in R.
\end{equation}

\begin{lemma}
\label{gen}
Let $\{x_1, \ldots, x_n \}$ be a set of generators of $R$. Then $D \in \Hom_{\bk}(M,N)$ is a differential operator of order $s$ if and only if 
\[
\delta(x)^m D = 0
\]
for all $m \in \Z_+^n$ with $|m| = s$.
\end{lemma}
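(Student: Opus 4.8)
The plan is to take as given the characterization recorded immediately before the statement: $D$ lies in $\Diff_s(M,N)$ (is a differential operator of order $\le s$) if and only if $\delta(f_1)\circ\cdots\circ\delta(f_s)(D)=0$ for all $f_1,\dots,f_s\in R$. With this in hand the forward implication is immediate, since one may specialize each $f_i$ to one of the generators $x_1,\dots,x_n$; grouping equal generators and using that the operators $\delta(x_j)$ pairwise commute (a direct consequence of $R$ being commutative), this gives $\delta(x)^m D=0$ for every $m\in\Z_+^n$ with $|m|=s$, where $\delta(x)^m:=\delta(x_1)^{m_1}\cdots\delta(x_n)^{m_n}$. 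The real content of the lemma is the converse: promoting the vanishing from the generators to all of $R$.

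For the converse I would first linearize. Since the map $(f_1,\dots,f_s)\mapsto \delta(f_1)\circ\cdots\circ\delta(f_s)(D)$ is $\bk$-multilinear and every element of $R$ is a $\bk$-linear combination of monomials in $x_1,\dots,x_n$, it suffices to prove $\delta(g_1)\circ\cdots\circ\delta(g_s)(D)=0$ when each $g_i$ is a monomial. To handle a single factor $\delta(\text{monomial})$, I introduce the commuting operators $\lambda(f)D=f\circ D$ and $\rho(f)D=D\circ f$ on $\Hom_{\bk}(M,N)$, so that $\delta(f)=\rho(f)-\lambda(f)$. Commutativity of $R$ makes $\lambda$ and $\rho$ multiplicative in $f$ and guarantees that all the operators $\lambda(x_i)$, $\rho(x_j)$, $\delta(x_k)$ commute with one another. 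Writing $\rho(x_i)=\lambda(x_i)+\delta(x_i)$ and expanding, one obtains for a monomial $x^a=x_{i_1}\cdots x_{i_d}$ the expansion
\[
\delta(x^a)=\sum_{\emptyset\neq S\subseteq\{1,\dots,d\}}\Bigl(\prod_{k\in S}\delta(x_{i_k})\Bigr)\Bigl(\prod_{k\notin S}\lambda(x_{i_k})\Bigr),
\]
whose crucial feature is that every summand carries at least one factor $\delta(x_{i_k})$.

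Substituting this expansion into $\delta(g_1)\circ\cdots\circ\delta(g_s)(D)$, each of the $s$ factors contributes at least one $\delta(x_j)$, so after commuting all the $\lambda$'s outward every resulting term takes the shape $\lambda(\,\cdot\,)\,\delta(x)^m D$ with $|m|\ge s$. It then remains to note that the hypothesis upgrades automatically from $|m|=s$ to $|m|\ge s$: given $|m|\ge s$, factor $\delta(x)^m=\delta(x)^{m'}\delta(x)^{m''}$ with $|m''|=s$, whence $\delta(x)^m D=\delta(x)^{m'}\bigl(\delta(x)^{m''}D\bigr)=0$ by assumption. Consequently every term vanishes, giving $\delta(g_1)\circ\cdots\circ\delta(g_s)(D)=0$ for monomials and, by multilinearity, $\delta(f_1)\circ\cdots\circ\delta(f_s)(D)=0$ for all $f_i\in R$; that is, $D\in\Diff_s(M,N)$.

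The one delicate point is the bookkeeping in the expansion together with the repeated appeal to commutativity of the operators $\lambda(x_i)$ and $\delta(x_j)$, which is precisely what lets me collect the scattered $\delta$-factors into a single $\delta(x)^m$ of total degree $\ge s$. I expect this structural product rule for $\delta$ to be the main (though fairly modest) obstacle; once it is in place, every remaining step is routine.
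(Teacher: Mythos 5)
Your argument is correct and follows essentially the same route as the paper, whose entire proof is the one-line appeal to the Leibniz relation $\delta(fg)D = f\circ\delta(g)D + (\delta(f)D)\circ g$; your expansion of $\delta(x^a)$ via the commuting operators $\lambda$ and $\rho$ is just a systematic repackaging of that same relation, with the linearization to monomials and the upgrade from $|m|=s$ to $|m|\geq s$ filled in explicitly.
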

Here $\delta(x)^m = \delta(x_1)^{m_1} \circ \ldots \circ \delta(x_n)^{m_n}$.
\begin{proof}
This follows from the relation
\[
\delta(fg) D = f \circ \delta(g) D + (\delta(f) D) \circ g.  
\]
\end{proof}

If $M$ is an $A\cV$-module, then $M$ comes with a representation $\rho:\cV \rightarrow \gl(M)$ of the Lie algebra $\cV$. The map $\rho$ is a $\bk$-linear map between the $A$-modules $\cV$ and $\gl(M)$. If $\rho$ is $A$-linear, then $M$ is a $\cD$-module. 
In general, $\rho$ need not be a homomorphism of $A$-modules, but it could still be a differential operator. 

We say that an $A\cV$-module $M$ is \emph{differentiable} if the representation $\rho:\cV \rightarrow \gl(M)$ associated to it is a differential operator. 
By (\ref{sDiff}) an $A\cV$-module $M$ is $s$-differentiable if and only if it is annihilated by 
\[
\sum_{j=0}^s (-1)^{j} \binom{s}{j} f^{j} \circ \rho(f^{s-j} \eta)
\]
for all $f \in A$, $\eta \in \cV$.

It was proved in \cite{BR23} in the setting of an arbitrary smooth affine variety that if an $A\cV$-module $M$ is finitely generated as $A$-module, then $M$ is differentiable. 
Later in this section we will prove that holonomic $A\cV$-modules are differentiable.

\begin{lemma}
\label{mL}
An $A\cV$-module $M$ is $s$-differentiable if an only if $\fm^s \cL_+$ annihilates $M$.
\end{lemma}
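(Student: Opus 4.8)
The plan is to make the equivalence completely explicit by computing the $\cL_+$-action on $M$ through the isomorphism $\psi$ of Theorem \ref{theorem:avisisotodtensorlplus} and recognising it as the iterated adjoint operators $\delta(x)^m$ that control differentiability. First I would record the key identity: applying $\psi|_{\cL_+}$ to a basis vector $X^m\xpartial{X_p}$ of $\cL_+$ and letting the result act on $v\in M$ gives
\[
\left( X^m\xpartial{X_p}\right) v = \sum_{0\le k\le m}(-1)^{|k|}\binom{m}{k}\, x^k\,\rho\!\left(x^{m-k}\xpartial{x_p}\right) v = \bigl(\delta(x)^m\rho\bigr)\!\left(\xpartial{x_p}\right) v,
\]
where $\delta(x)^m=\delta(x_1)^{m_1}\circ\cdots\circ\delta(x_n)^{m_n}$ as in Lemma \ref{gen}. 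Thus the action of $X^m\xpartial{X_p}$ on $M$ is exactly $\delta(x)^m\rho$ evaluated on the constant-coefficient field $\xpartial{x_p}$. I would also note that $\fm^s\cL_+=\bigoplus_{d\ge s}\cL_d$ is spanned by the $X^m\xpartial{X_p}$ with $|m|\ge s+1$, so that ``$\fm^s\cL_+$ annihilates $M$'' means precisely $\bigl(\delta(x)^m\rho\bigr)(\xpartial{x_p})=0$ for all $p$ and all $|m|\ge s+1$.

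For the direction that differentiability forces $\fm^s\cL_+$ to act by zero: if $\rho$ is a differential operator of the relevant order, then $\delta(x)^m\rho=0$ for all sufficiently large $|m|$, and since the $\delta(x_i)$ pairwise commute, $\delta(x)^k=\delta(x)^{k-m}\circ\delta(x)^m$ whenever $m\le k$, so the vanishing propagates upward to every $|k|\ge s+1$. Evaluating at $\xpartial{x_p}$ and using the identity above, every generator $X^m\xpartial{X_p}$ with $|m|\ge s+1$ annihilates $M$, hence so does $\fm^s\cL_+$.

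The converse is the substantive part, and it is where I expect the main obstacle. The hypothesis that $\fm^s\cL_+$ annihilates $M$ tells us directly only that $\bigl(\delta(x)^m\rho\bigr)(\xpartial{x_p})=0$ on the constant-coefficient fields $\xpartial{x_p}$, whereas differentiability requires $\delta(x)^m\rho$ to vanish on all of $\cV$, in particular on every $x^r\xpartial{x_p}$. To bridge this gap I would introduce the commuting operators $S_i,L_i$ on $\Hom_{\bk}(\cV,\gl(M))$ defined by $(S_iD)(\eta)=D(x_i\eta)$ and $(L_iD)(\eta)=x_i\cdot D(\eta)$, so that $\delta(x_i)=S_i-L_i$ and evaluation at $x^r\xpartial{x_p}$ is $S^r$ followed by evaluation at $\xpartial{x_p}$. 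Expanding $S^r=\bigl((S-L)+L\bigr)^r$ binomially then gives
\[
\bigl(\delta(x)^m\rho\bigr)\!\left(x^r\xpartial{x_p}\right)=\sum_{0\le j\le r}\binom{r}{j}\, x^{r-j}\,\bigl(\delta(x)^{m+j}\rho\bigr)\!\left(\xpartial{x_p}\right),
\]
and each summand on the right is $x^{r-j}$ times the action of $X^{m+j}\xpartial{X_p}$. Since $|m+j|\ge|m|$, taking $|m|$ large enough that every $|m+j|\ge s+1$ makes all of these $\cL_+$-actions vanish by hypothesis; hence $\delta(x)^m\rho=0$ on all of $\cV$ and $\rho$ is a differential operator of the required order.

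The one remaining point is bookkeeping of the grading: the convention $X^k\xpartial{X_p}\in\cL_{|k|-1}$ carries the shift between the order of $\rho$ and the cutoff $|m|\ge s+1$, and this is exactly what produces the exponent $s$ in $\fm^s\cL_+$. The genuine difficulty, however, is the converse bridge of the previous paragraph: one must see that controlling $\rho$ only on the constant-coefficient fields $\xpartial{x_p}$ — which is all the vanishing of $\psi|_{\cL_+}(\cL_+)$ provides — together with the commuting-operator identity already controls $\rho$ on the whole of $\cV$.
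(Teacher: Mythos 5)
Your proof is correct and follows the same route as the paper: identify the action of $X^m\xpartial{X_p}$ on $M$ with $(\delta(x)^m\rho)\bigl(\xpartial{x_p}\bigr)$ via the isomorphism $\psi$ of Theorem \ref{theorem:avisisotodtensorlplus}, then invoke the characterization of differential operators through the commuting operators $\delta(x_i)$ as in Lemma \ref{gen}. Your expansion $(\delta(x)^m\rho)\bigl(x^r\xpartial{x_p}\bigr)=\sum_{0\le j\le r}\binom{r}{j}x^{r-j}(\delta(x)^{m+j}\rho)\bigl(\xpartial{x_p}\bigr)$ correctly supplies the step the paper leaves implicit, namely that vanishing of $\delta(x)^m\rho$ on the constant-coefficient fields already forces vanishing on all of $\cV$.
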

\begin{proof}
It follows from Theorem \ref{theorem:avisisotodtensorlplus} that the action of $X^m  \xpartial{X_i}$ on $M$ is given by $(\delta(x)^m \rho) \xpartial{x_i}$.
Now the claim follows from Lemma \ref{gen}.
\end{proof}

\begin{lemma}
\label{Lp-ann}
Let $S$ be a submodule in an $\cL_+$-module $Q$. If $S$ is annihilated by 
$\fm^s \cL_+$ and $Q/S$ is annihilated by $\fm^r \cL_+$ then $Q$ is annihilated
by $\fm^q \cL_+$, where $q = 2\max(s,r)$ if $n > 1$ 
and $q = 2\max(s,r) + 1$ if $n=1$.
\end{lemma}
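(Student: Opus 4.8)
The plan is to reduce everything to the single exponent $t=\max(s,r)$ and then exploit the grading $\cL_+=\bigoplus_{d\geq 0}\cL_d$, under which $\fm^t\cL_+=\bigoplus_{d\geq t}\cL_d$ is spanned by the homogeneous fields $X^k\xpartial{X_i}$ with $|k|\geq t+1$. First I would note that since $t\geq s$ and $t\geq r$ we have $\fm^t\cL_+\subseteq\fm^s\cL_+$ and $\fm^t\cL_+\subseteq\fm^r\cL_+$; hence $\fm^t\cL_+$ annihilates $S$, while $(\fm^t\cL_+)Q\subseteq S$. Consequently, for any $\zeta,\xi\in\fm^t\cL_+$ the associative product $\zeta\xi\in U(\cL_+)$ annihilates $Q$: reading $(\zeta\xi)v=\zeta(\xi v)$ right to left, the right factor (being in $\fm^r\cL_+$) maps $Q$ into $S$, and the left factor (being in $\fm^s\cL_+$) then kills $S$. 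As both $\zeta$ and $\xi$ lie in $\fm^t\cL_+\subseteq\fm^s\cL_+\cap\fm^r\cL_+$, the same holds for the opposite order $\xi\zeta$, so the Lie bracket $[\zeta,\xi]=\zeta\xi-\xi\zeta$ annihilates $Q$ as well. Thus every element of $[\fm^t\cL_+,\fm^t\cL_+]\subseteq\cL_+$ acts by zero on $Q$.

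It then remains to show that $\fm^q\cL_+$ is contained in this bracket subspace, i.e. that each homogeneous component $\cL_d$ with $d\geq q$ is spanned by brackets $[\cL_a,\cL_b]$ with $a,b\geq t$ and $a+b=d$. For $n>1$, Lemma \ref{lemma:generatorsoflplus} and the fact that $\cL_{++}$ is generated by $\cL_1$ yield the sharper surjectivity $[\cL_a,\cL_b]=\cL_{a+b}$ for all $a,b\geq 1$; taking $a=t$ and $b=d-t$ (so that $d-t\geq t\geq 1$) gives $\cL_d=[\cL_t,\cL_{d-t}]$ for every $d\geq 2t$, which is exactly $q=2\max(s,r)$. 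For $n=1$ the algebra $\cL_+$ sits inside the Witt algebra, where each $\cL_d$ is one-dimensional and $[\cL_a,\cL_b]=(b-a)\cL_{a+b}$; here $[\cL_t,\cL_t]=0$, so $\cL_{2t}$ cannot be produced, but $[\cL_t,\cL_{d-t}]=\cL_d$ whenever $d-t\neq t$, so all $\cL_d$ with $d\geq 2t+1$ are reached, giving $q=2\max(s,r)+1$. In either case $\fm^q\cL_+=\bigoplus_{d\geq q}\cL_d$ lies in $[\fm^t\cL_+,\fm^t\cL_+]$ and therefore annihilates $Q$, proving the lemma.

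The main obstacle is the generation statement in the second paragraph: one needs $\cL_d$ to be spanned by brackets of two homogeneous fields \emph{each} of degree $\geq t$, with both factors bounded below, since this two-sided bound is what pins down the value of $q$. For $n>1$ this requires more than Lemma \ref{lemma:generatorsoflplus} gives directly, namely the surjectivity $[\cL_a,\cL_b]=\cL_{a+b}$ including the diagonal case $[\cL_t,\cL_t]=\cL_{2t}$ needed to cover $d=2t$; I would establish it by induction on $a$ using the Jacobi identity together with the relation $\cL_c=[\cL_1,\cL_{c-1}]$. For $n=1$ the vanishing $[\cL_t,\cL_t]=0$ in the Witt algebra is precisely the reason the smallest reachable degree jumps to $2t+1$, and this $n=1$ versus $n>1$ dichotomy is the delicate point; everything else follows formally from the two-step filtration $0\subseteq S\subseteq Q$.
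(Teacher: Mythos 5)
Your proof is correct and follows essentially the same route as the paper: the paper's entire argument consists of the assertion that $\left[\fm^s\cL_+,\fm^s\cL_+\right]=\fm^{2s}\cL_+$ (respectively $\fm^{2s+1}\cL_+$ when $n=1$) together with the implicit observation that, for $t=\max(s,r)$, any product of two elements of $\fm^t\cL_+$ annihilates $Q$ (the right factor maps $Q$ into $S$, the left factor kills $S$), hence so does any bracket; you have simply written out both halves in detail. One small caution on the step you flag as the main obstacle: for $n>1$ the diagonal identity $[\cL_t,\cL_t]=\cL_{2t}$ does not follow from a Jacobi-identity induction based on $\cL_c=[\cL_1,\cL_{c-1}]$ alone, since Jacobi only yields $\cL_{2t}=[\cL_1,\cL_{2t-1}]\subseteq[\cL_{t+1},\cL_{t-1}]+[\cL_t,\cL_t]$, whose first summand involves a factor of degree $t-1<t$; this case needs a short direct computation with monomial vector fields $X^k\xpartial{X_i}$, which is what the paper is sweeping into ``straightforward to check.''
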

\begin{proof}
It is straightforward to check that 
\[
\left[ \fm^s \cL_+, \fm^s \cL_+ \right] = \fm^{2s} \cL_+,
\]
unless $n=1$, in which case the right hand side is $\fm^{2s+1} \cL_+$. This implies the claim of the Lemma.
\end{proof}

\begin{theorem}\label{theorem:holonomicmodulesaredifferentiable}
Any holonomic $A\cV$-module $M$ is differentiable.
\end{theorem}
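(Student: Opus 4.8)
The plan is to reduce the statement to showing that a holonomic $A\cV$-module $M$ is annihilated by $\fm^s \cL_+$ for some $s$. By Lemma \ref{mL}, $M$ is $s$-differentiable precisely when $\fm^s \cL_+$ annihilates $M$, so differentiability of $M$ is equivalent to the existence of such an $s$. Thus the whole theorem becomes a statement purely about the $U(\cL_+)$-module structure on $M$: I need to find a uniform annihilation order.

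**First I would** handle the simple case and then bootstrap along a composition series. By Theorem \ref{theorem:holonomicavmodhasfinitelength}, $M$ has finite length, so it admits a composition series whose factors are simple holonomic $A\cV$-modules. By Theorem \ref{theorem:simpleholonomicistensorproduct}, each such factor is isomorphic to $P_i \otimes Q_i$ where $P_i$ is a simple holonomic $\cD$-module and $Q_i$ is a simple finite-dimensional $\gl_n$-module. A finite-dimensional $\gl_n$-module is annihilated by $\cL_{++} = \bigoplus_{k \geq 2} \cL_k$ once we factor through the projection $\cL_+ \to \gl_n = \cL_0$; more precisely each simple factor $Q_i$ is annihilated by $\fm \cL_+$ (since $\fm \cL_+ = \bigoplus_{k \geq 1} \cL_k$ acts trivially on a module that factors through $\gl_n$), hence each composition factor of $M$ is annihilated by $\fm \cL_+$, i.e.\ is $1$-differentiable.

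**The main work is then** to propagate the annihilation condition up the composition series using Lemma \ref{Lp-ann}. Suppose $N \subset M$ with $N$ annihilated by $\fm^s \cL_+$ and $M/N$ annihilated by $\fm^r \cL_+$; Lemma \ref{Lp-ann} produces a single $q$ (roughly $2\max(s,r)$, or $2\max(s,r)+1$ when $n=1$) with $\fm^q \cL_+$ annihilating $M$. Viewing $M$ as an $\cL_+$-module and applying this lemma inductively to the finite filtration coming from the composition series — starting from factors each annihilated by $\fm \cL_+$ — I obtain in finitely many steps a single exponent $s$ with $\fm^s \cL_+ \cdot M = 0$. By Lemma \ref{mL} this says $M$ is $s$-differentiable, which is exactly the claim that the representation $\rho$ is a differential operator.

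**The step I expect to be the main obstacle** is making sure the inductive argument via Lemma \ref{Lp-ann} is set up cleanly: the lemma is stated for a two-step extension of $\cL_+$-modules, and one must check that the relevant submodules and quotients in the composition series of $M$ really are $\cL_+$-submodules (which they are, since $\cL_+$ sits inside $A\#U(\cV)$ and every $A\cV$-submodule is in particular an $\cL_+$-submodule). The exponent $q$ grows at each stage, but since the length $k$ of the composition series is finite, the process terminates with a finite bound $s$, and no uniformity issue arises. Everything else is a direct application of the previously established structure theorems, so the proof is essentially an assembly of Lemmas \ref{mL}, \ref{Lp-ann}, Theorem \ref{theorem:simpleholonomicistensorproduct}, and Theorem \ref{theorem:holonomicavmodhasfinitelength}.
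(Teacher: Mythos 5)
Your proposal is correct and follows essentially the same route as the paper's own proof: take a composition series (Theorem \ref{theorem:holonomicavmodhasfinitelength}), identify each factor as $P_i\otimes Q_i$ with $Q_i$ a simple finite-dimensional $\gl_n$-module annihilated by $\fm\cL_+$ (Theorem \ref{theorem:simpleholonomicistensorproduct}), propagate the annihilation up the series via Lemma \ref{Lp-ann}, and conclude with Lemma \ref{mL}. No substantive differences.
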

\begin{proof}
By Theorem \ref{theorem:holonomicavmodhasfinitelength}, $M$ has finite length. Let \[
0 = N_0 \subset N_1 \subset \dots \subset N_r = M
\]
be a composition series for $M$. Then by Theorem \ref{theorem:simpleholonomicistensorproduct}, $N_i / N_{i-1} \cong P_i \otimes Q_i$ as a module over $\cD \otimes U(\cL_+)$, where $P_i$ is a simple holonomic $\cD$-module and $Q_i$ is a finite-dimensional simple $U(\cL_+)$-module. Simple finite-dimensional $U(\cL_+)$-modules are $\gln$-modules, and are annihilated by $\fm \cL_+$. 
Thus each quotient $N_i / N_{i-1}$ is annihilated by $\fm \cL_+$.
Applying repeatedly Lemma \ref{Lp-ann}, we conclude that $M$ is annihilated 
by $\fm^s \cL_+$ for some $s$.
By Lemma \ref{mL}, $M$ is differentiable.

\end{proof}

Differentiable $A\cV$-modules admit a sheafification into a quasicoherent sheaf. Let us outline this procedure.
Let $f\in A$ nonzero, then the localization
\[
A_f = \left \{ \frac{g}{f^k}\mid g\in A, \ k \geq 0  \right \}
\]
is an algebra. If $M$ is an $A$-module, then $M_f = A_f \otimes_A M$ is an $A_f$-module. Suppose that $M$ is an $A\cV$-module, then it is not necessarily true that $M_f$ is an $A_f\cV_f$-module, i.e. a module over the associative algebra $A_f \# U(\cV_f)$. However, if $M$ is finitely generated as $A$-module, this holds true by \cite{BR23}. We can use Theorem \ref{theorem:holonomicmodulesaredifferentiable} to show that same holds for differentiable $A\cV$-modules.
\begin{corollary}[cf. \cite{BI23, BR23}]
Let $M$ be an $s$-differentiable $A\cV$-module with associated representation $\rho : \cV \rightarrow \gl(M)$ and let $f\in A$ with $f \neq0$. Then, $M_f$ is a module over $A_f \# U(\cV_f)$. Explicitly, the action of $A_f$ on $M_f$ is given by localization and 
\[
\left( \frac{g}{f^k} \xpartial{x_i}\right) m = \sum_{p=0}^{s} \sum_{l=0}^p (-1)^l \binom{p+k}{p} \binom{p}{k} \frac{1}{f^{k+l}} \rho\left(f^lg \xpartial{x_i}\right)m
\]
for every $g \in A$, $k \geq 0$ and $m\in M_f$.
\end{corollary}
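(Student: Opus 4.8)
The engine of the argument is the classical fact that a differential operator of finite order between two $A$-modules extends uniquely to a differential operator of the same order between their localizations at $f$; this is immediate from the Grothendieck description of $\Diff_s$ recalled above. The plan is to apply this principle to the representation map $\rho$ and then verify that the resulting operators on $M_f$ satisfy the defining relations of a module over $A_f \# U(\cV_f)$.

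First I would record two differential-operator facts about $\rho$. For a fixed $\eta \in \cV$ the Leibniz rule gives $\delta(g)\,\delta(f)\,\rho(\eta) = 0$ for all $f,g \in A$ (since $\delta(f)\rho(\eta)$ is multiplication by $\eta(f)\in A$), so each $\rho(\eta)$ is a differential operator $M \to M$ of order at most one and hence localizes to a well-defined $\bk$-linear operator on $M_f$. Viewing $\cV$ and $\End_\bk(M)$ as $A$-modules (the latter through left multiplication by $A$), the $s$-differentiability hypothesis, in the form (\ref{sDiff}), says exactly that $\rho \colon \cV \to \End_\bk(M)$ is a differential operator of order at most $s$.

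Next I would produce the explicit formula by localizing $\rho$. A standard computation shows that the unique extension of a finite-order differential operator $D$ of $A$-modules satisfies a localization formula of the shape $D_f(n/f^k) = \sum_{j \ge 0} c_{k,j}\, f^{-(k+j)}\,(\delta(f)^j D)(n)$ with explicit rational coefficients $c_{k,j}$, the sum being finite because $D$ has finite order. Specializing $D = \rho$ and $n = g\,\xpartial{x_i}$, and expanding $(\delta(f)^j\rho)(g\,\xpartial{x_i}) = \sum_{l} (-1)^l \binom{j}{l}\, f^{l}\, \rho(f^{\,j-l} g\,\xpartial{x_i})$ via the binomial expansion of $\delta(f)^j$, one obtains after re-indexing the closed form stated in the corollary. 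Independence of the chosen representative $g/f^k$ of the fraction is automatic from the uniqueness of the extension, so $\rho_f$ is a well-defined $\bk$-linear map $\cV_f \to \gl(M_f)$.

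Finally I would check that $\rho_f$ makes $M_f$ a module over $A_f \# U(\cV_f)$, i.e. that it obeys the Leibniz rule $\rho_f(\eta)(a m) = \eta(a)\,m + a\,\rho_f(\eta)(m)$ over $A_f$ and is a homomorphism of Lie algebras, $\rho_f([\eta,\mu]) = [\rho_f(\eta), \rho_f(\mu)]$, for $\eta, \mu \in \cV_f$ and $a \in A_f$. For $\eta, \mu \in \cV$ with polynomial coefficients both identities already hold on $M$, since there $\rho_f$ restricts to the $A\cV$-action; because composites and commutators of finite-order differential operators are again finite-order differential operators, and localization of differential operators is compatible with composition, both sides of each identity are the unique localized extensions of operators agreeing on $M$, hence agree on all of $M_f$. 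The general case with $A_f$-coefficients is then reduced to this one by expanding the Lie bracket $[\tfrac{a}{f^k}\eta,\tfrac{b}{f^l}\mu]$ and invoking the Leibniz rule just established. I expect the Lie-bracket relation to be the main obstacle: one must organize the reduction so that the correction terms coming from the non-$A_f$-linearity of the bracket are exactly absorbed, which is cleanest to do through the uniqueness of the localized extension rather than by direct manipulation of the explicit formula.
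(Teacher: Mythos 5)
The paper does not actually prove this corollary: it is stated with a ``cf.~\cite{BI23, BR23}'' and the argument is deferred to those references, so there is no in-paper proof to compare against. Your plan --- regard $\rho$ as a differential operator of order at most $s$ from the $A$-module $\cV$ to $\End_{\bk}(M)$, note that each individual $\rho(\eta)$ is a first-order operator on $M$ by the Leibniz rule, localize both, and transport the module relations from $M$ to $M_f$ by uniqueness of localized extensions --- is exactly the mechanism used in the cited references, so in substance you have reconstructed the intended argument. Three points need tightening before this is a proof. First, $M \rightarrow M_f$ is not injective in general (it kills $f$-torsion), so ``both sides agree on $M$'' must be upgraded to the lemma that a finite-order differential operator on $M_f$ vanishing on the image of $M$ is zero; this follows by induction on the order, since $\delta(f)$ of such an operator again vanishes on the image of $M$, and an operator on $M_f$ commuting with $f$ also commutes with $f^{-1}$. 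Second, the reduction of the Leibniz and bracket identities from $A_f$-coefficients to $A$-coefficients, which you rightly flag as the delicate step, is genuinely more than agreement on the image of $M$: for example the Leibniz rule with $a = f^{-1}$ is essentially the defining property of the order-one localization of $\rho(\eta)$ rather than an inherited identity, and the bracket relation for $f^{-k}\eta$ mixes the two localizations; this bootstrapping has to be written out. Third, you assert that ``after re-indexing'' one obtains the displayed coefficients without performing the computation; note that the formula as printed cannot be literally correct --- the factor $\binom{p}{k}$ forces the entire right-hand side to vanish whenever $k > s$, and is presumably a typo for $\binom{p}{l}$ --- so this is precisely a place where the expansion $D_f(v/f^k) = \sum_{p}\binom{-k}{p} f^{-k-p}\bigl(\delta(f)^p D\bigr)(v)$ combined with $\bigl(\delta(f)^p\rho\bigr)(\eta) = \sum_{l}(-1)^l\binom{p}{l}f^l\rho(f^{p-l}\eta)$ should be carried through explicitly and the resulting coefficients checked against the statement.
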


\section{Examples of holonomic $A\cV$-modules}
\label{section:examplesofholonomicavmod}

Let $M$ be an $A\cV$-module that is finitely generated as an $A$-module. By \cite[Theorem 5.4]{BIN23}, $M$ is a gauge module. A gauge module is an $A\cV$-module that is a free $A$-module of finite rank, $M \cong A \otimes W$, that admits an $A$-linear representation $\rho: \cL_+ \rightarrow \gl_{A}(M) $ and $A$-linear maps $B_i: A \otimes W \rightarrow A \otimes W$, $i=1,\dots,n$ with
\[
\left [ \xpartial{x_i} \otimes 1 + B_i , \xpartial{x_j} \otimes 1 + B_j\right ] =0 \ \text{and}\
\left [B_i , \rho(\cL_+) \right ] =0
\]
such that $f(g \otimes w) = (fg) \otimes w$ and
\begin{equation*}
\left (x^m \xpartial{x_i} \right )(g \otimes w ) = x^m \fxpartial{g}{x_i} w + x^m g B_i(w) + g\sum_{0 < k \leq m} \binom{m}{k} x^{m-k} \rho\left ( X^k \xpartial{X_i} \right )w.
\end{equation*}
By Proposition \ref{proposition:fingenamodarehol}, gauge modules are holonomic $A\cV$-modules because they are finitely generated as $A$-modules.

Another family of $A\cV$-modules that was studied in the literature is the \emph{Rudakov modules}, see \cite{BFN19,BB24}. Let $p \in \mathbb{A}^n$ and $P$ be the $\cD$-module of delta-functions supported at $p$. The module $P$ is spanned by all partial derivatives of delta-functions $\delta_p$, i.e.
\[
P \cong \bk\left [\xpartial{x_1},\dots,\xpartial{x_n} \right ]\delta_p.
\]
From this realization, we immediately see that $P$ is a holonomic $\cD$-module. Let $W$ be any finite-dimensional $\cL_+$-module. The Rudakov module $R_p(W)$ is the tensor product
\[
R_p(W) = P \otimes W\cong \bk\left [\xpartial{x_1},\dots,\xpartial{x_n} \right ]\delta_p \otimes W.
\]
This is an example of a finitely generated $A\cV$-module that is not finitely generated as an $A$-module. Because $P$ is a holonomic $\cD$-module and $W$ is finite-dimensional, we have that Rudakov modules are holonomic $A\cV$-modules.

By Theorem \ref{theorem:simpleholonomicistensorproduct}, a simple holonomic $A\cV$-module is the tensor product of a holonomic $\cD$-module and a simple finite-dimensional $\gl_n$-module. This construction played a crucial role in the classification of weight modules over $\cV$ with finite weight multiplicities established in \cite{GS22} as well in the classification of such modules over the Lie algebra of vector fields on a torus \cite{BF16}. In particular, $A\cV$-modules that appear in these papers are holonomic.



\begin{thebibliography}{}
\bibitem[BB81]{BB81} A. Beilinson and J. Bernstein, Localisation de $g$-modules, C. R. Acad. Sci. Paris S\'er. I Math. {\bf 292} (1981), no.~1, 15--18.

\bibitem[BB24]{BB24} Y. Billig and E. Bouaziz. Sheaves of AV-modules on quasi-projective varieties. {\it arXiv:2409.02677} [math.RT], 2024.

\bibitem[BF16]{BF16} Y. Billig and V. Futorny, Classification of irreducible representations of Lie algebra of vector fields on a torus, J. Reine Angew. Math. {\bf 720} (2016), 199--216.

\bibitem[BF18]{BF18} Y. Billig and V. Futorny, Lie algebras of vector fields on smooth affine varieties, Comm. Algebra {\bf 46} (2018), no.~8, 3413--3429; 

\bibitem[BFN19]{BFN19} Y. Billig, V. Futorny and J. Nilsson, Representations of Lie algebras of vector fields on affine varieties, Israel J. Math. {\bf 233} (2019), no.~1, 379--399.

\bibitem[BI23]{BI23} Y. Billig and C. Ingalls. A universal sheaf of algebras governing representations of vector fields on quasi-projective varieties. {\it arXiv:2302.07918} [math.RT], 2023.

\bibitem[BIN23]{BIN23} Y. Billig, C. Ingalls and A. Nasr, $\mathcal{AV}$ modules of finite type on affine space, J. Algebra {\bf 623} (2023), 481--495.

\bibitem[BK81]{BK81} J.-L. Brylinski and M. Kashiwara, Kazhdan-Lusztig conjecture and holonomic systems, Invent. Math. {\bf 64} (1981), no.~3, 387--410.

\bibitem[BNZ21]{BNZ21} Y. Billig, J. Nilsson and A. Zaidan, Gauge modules for the Lie algebras of vector fields on affine varieties, Algebr. Represent. Theory {\bf 24} (2021), no.~5, 1141--1153.

\bibitem[Bou22]{Bou22} N. Bourbaki, {\it Elements of Mathematics. Algebra. Chapter 8}, Springer, Cham, 2022.

\bibitem[BR23]{BR23} E. Bouaziz and H. Rocha, Annihilators of $A\mathcal{V}$-modules and differential operators, J. Algebra {\bf 636} (2023), 869--887.

\bibitem[Cou95]{Cou95} S.~C. Coutinho, {\it A primer of algebraic $D$-modules}, London Mathematical Society Student Texts, 33, Cambridge Univ. Press, Cambridge, 1995.

\bibitem[DNR01]{DNR01} S. D\u asc\u alescu, C. N\u ast\u asescu and \c S. Raianu, {\it Hopf algebras}, Monographs and Textbooks in Pure and Applied Mathematics, 235, Dekker, New York, 2001.

\bibitem[Gro67]{Gro67} A. Grothendieck, \'El\'ements de g\'eom\'etrie alg\'ebrique. IV. \'Etude locale des sch\'emas et des morphismes de sch\'emas IV, Inst. Hautes \'Etudes Sci. Publ. Math. No. 32 (1967), 361 pp..

\bibitem[GS22]{GS22} D. Grantcharov and V. Serganova, Simple weight modules with finite weight multiplicities over the Lie algebra of polynomial vector fields, J. Reine Angew. Math. {\bf 792} (2022), 93--114.

\bibitem[Isa09]{Isa09} I.~M. Isaacs, {\it Algebra: a graduate course}, reprint of the 1994 original, Graduate Studies in Mathematics, 100, Amer. Math. Soc., Providence, RI, 2009.

\bibitem[Kas80]{Kas80} M. Kashiwara, Faisceaux constructibles et syst\`emes holon\^omes d'\'equations aux d\'eriv\'ees partielles lin\'eaires \`a{} points singuliers r\'eguliers, in {\it S\'eminaire Goulaouic-Schwartz, 1979--1980 (French)}, Exp. No. 19, 7 pp, \'Ecole Polytech., Palaiseau.

\bibitem[Kas84]{Kas84} M. Kashiwara, The Riemann-Hilbert problem for holonomic systems, Publ. Res. Inst. Math. Sci. {\bf 20} (1984), no.~2, 319--365.

\bibitem[KL00]{KL00} G.~R. Krause and T.~H. Lenagan, {\it Growth of algebras and Gelfand-Kirillov dimension}, revised edition, Graduate Studies in Mathematics, 22, Amer. Math. Soc., Providence, RI, 2000.

\bibitem[XL23]{XL23} Y. Xue and R. L\"u, Classification of simple bounded weight modules of the Lie algebra of vector fields on $\mathbb{C}^n$, Israel J. Math. {\bf 253} (2023), no.~1, 445--468.
\end{thebibliography}

\end{document}